\documentclass[oneside,a4paper,12pt,notitlepage]{article}
\usepackage[T1]{fontenc} 
\usepackage[utf8]{inputenc} 
\usepackage{lipsum} 
\usepackage{lmodern}
\usepackage{amssymb}
\usepackage{amsthm}
\usepackage{bm}
\usepackage{mathtools}
\usepackage{braket}
\usepackage{esint}
\usepackage{todonotes}

\newcommand{\abs}[1]{{\left|#1\right|}}
\newcommand{\norma}[1]{{\left\Vert#1\right\Vert}}

\usepackage{booktabs}
\usepackage{graphicx}
\usepackage{tikz}
\usetikzlibrary{patterns}
\usepackage{multicol}
\usepackage{caption}
\usepackage{enumerate}
\usepackage[skins,theorems]{tcolorbox}
\tcbset{highlight math style={enhanced,
		colframe=black,colback=white,arc=0pt,boxrule=1pt}}
\captionsetup{tableposition=top,figureposition=bottom,font=small}
\pagestyle{headings}

\def\XXint#1#2#3{{\setbox0=\hbox{$#1{#2#3}{\int}$}
    \vcenter{\hbox{$#2#3$}}\kern-.5\wd0}}

\theoremstyle{definition}
\newtheorem{definizione}{Definition}[section]
\theoremstyle{plain}
\newtheorem{teorema}{Theorem}[section]

\newtheorem{lemma}[teorema]{Lemma}
\newtheorem{prop}[teorema]{Proposition}

\theoremstyle{definition}
\newtheorem{esempio}{Example}[section]
\newtheorem{oss}[esempio]{Remark}

\renewcommand{\div}{\text{div}}

\DeclareMathOperator{\R}{\mathbb{R}}

\makeatletter
\newcommand{\myfootnote}[2]{\begingroup
	\def\@makefnmark{}%
	\addtocounter{footnote}{-1}%
	\footnote{\textbf{#1} #2}%
	\endgroup}
\makeatother

\numberwithin{equation}{section}

\usepackage[style = alphabetic, maxbibnames=99, maxcitenames = 99, maxbibnames = 99, maxalphanames = 99]{biblatex}
\bibliography{biblio.bib}

\usepackage{hyperref}
\hypersetup{linktoc=none, bookmarksnumbered, colorlinks=true, linkcolor=red}

\title{Isoperimetric estimates for solutions to the p-Laplacian with variable Robin boundary conditions}

\author{Vincenzo Amato, Francesco Chiacchio, Andrea Gentile}
\date{\today}

\begin{document}

\maketitle
	
\begin{abstract}
\noindent
In this paper we study the $p$-Poisson equation with Robin boundary conditions, where the Robin parameter is a function.
By means of some weighted isoperimetric inequalities, we provide various sharp bounds for the solutions to the problems under consideration. We also derive a  Faber-Krahn-type inequality. 
\\[1ex]
	
\noindent    \textsc{MSC 2020:}  35J92, 35P15, 35B45\\
    \textsc{Keywords:}  Weighted symmetrization, $p$-Laplacian, variable Robin parameter, comparison results.

\end{abstract}

\section{Introduction}
Symmetrization methods have been revealed to be a very effective and
flexible tool in the study of partial differential equations (in the sequel
just PDEs). The related bibliography  is very wide and one of the
 cornerstone of this theory is the Talenti's Theorem (see \cite{Talenti_76}), which makes use
of the Schwarz symmetrization. The technique he introduced has been refined
through the years and it has been adapted to treat a large class of linear and
nonlinear elliptic and parabolic PDEs (see, e.g., the survey papers \cite{Talenti_art_of_rearranging}, \cite{Trombetti_G_Symmetrization_methods} and the monographs \cite{Kawohl_rearrangements_and_convexity}, \cite{Kesavan_symmetrization_e_applicat} and \cite{Baernstein}). However, in most of the subsequent works, the authors, in order to
follow Talenti's original idea, consider problems whose solutions have level
sets that do not intersect the boundary of the domain where the problem is
defined.

Recently it has been discovered a method to adapt symmetrization techniques
to a class of problems that does not exhibit this feature. More precisely, in 
\cite{ANT}, the authors were able to obtain a Talenti-type comparison result  for the
Laplacian with Robin boundary conditions. For further developments in
this direction see, e.g., \cite{AGM, CGNT}, where the $p$-Laplace and Hermite operator are studied, respectively, 
while in  \cite{ACNT} and \cite{ACNT2} the authors investigate linear
problems where the Robin parameter is allowed to be a function.

Here we generalize the above results by considering the following problem

\begin{equation}
    \begin{cases}
        - \div\bigl( \abs{\nabla u}^{p-2} \nabla u \bigr) = f(x) \abs{x}^\ell & \text{in } \Omega \\[1.2ex]
        \displaystyle{\abs{\nabla u}^{p-2} \, \frac{\partial u}{\partial \nu} + \beta(x) \abs{u}^{p-2}u = 0} & \text{on } \partial \Omega ,
    \end{cases}
    \label{p_originale}
\end{equation}
where, here and throughout, $n\geq 2$,  
$\nu $ denotes the outer unit normal to $\partial \Omega $ and $\Omega $ is a bounded and Lipschitz domain of $\R^n$ such that $0 \notin \partial \Omega$.

Furthermore we will assume that
\begin{equation}
\tag{$H_1$}
    p \geq n ,  \label{p>n}
\end{equation}
\begin{equation}
\tag{$H_2$}
    -n<\ell<0,  \label{Ip_n_l}
\end{equation}
\begin{equation}
\tag{$H_3$}
   m=\inf_{\partial \Omega }\beta (x)>0\text{ \ and \ }M=\sup_{\partial \Omega
    }\beta (x)<+\infty , 
    \label{Ip_beta}
\end{equation}
and, finally,
\begin{equation}
\tag{$H_4$}
    0\leq f(x)\in L^{p^{\prime }}(\Omega ,\abs{x}^\ell \, dx),
    \label{Ip_f}
\end{equation}
where $p^{\prime }=\displaystyle\frac{p}{p-1}$.

Hypotheses  \eqref{p>n} and \eqref{Ip_n_l} ensure the validity of certain  isoperimetric inequalities, where two different weights (which are suitable powers of the distance from the origin) appear in the perimeter and in the area element, respectively (see, e.g., \cite{HS,Chiba_Horiuchi,DHHT,BBMP,CG,ABCMP} and the references therein). Indeed we need to use, in place of the more common Schwarz symmetrization, some weighted rearrangement, based on these ``double density'' isoperimetric inequalities.

A weak solution to problem (\ref{p_originale}) is a function $u\in
W^{1,p}(\Omega )$ such that 
\begin{equation}
    \int_{\Omega } \abs{\nabla u}^{p-2} \nabla u\nabla \varphi \,
    dx+\int_{\partial \Omega }\beta (x)\left\vert u\right\vert ^{p-2}u\varphi d \mathcal{H}^{n-1}(x)=\int_{\Omega }f\left\vert x\right\vert ^{\ell}\varphi \, dx
    \label{Weak_Form}
\end{equation}
for any $\varphi \in W^{1,p}(\Omega )$.

Now we need to construct the so-called symmetrized problem. That is a radial problem, of the same type of (\ref{p_originale}), whose solution will estimate the one to problem (\ref{p_originale}).

To this aim we introduce the set $\Omega ^{\sharp }$ which is the ball centered at the origin (in the sequel just centered ball) of
radius $r^{\sharp }$, \ with $r^{\sharp }$ such that 
\begin{equation*}
    \lvert \Omega ^{\sharp } \rvert _{\ell}=\int_{\Omega ^{\sharp
    }}\left\vert x\right\vert ^{\ell}dx=\int_{\Omega }\left\vert x\right\vert
    ^{\ell}dx=\left\vert \Omega \right\vert _{\ell},
\end{equation*}
and the function $f^{\sharp }(x)$ defined by the following relation 
\begin{equation*}
    \lvert \Set{ x\in \Omega :
    \abs{f(x)}>t} \rvert_{\ell} = \lvert \{ x\in \Omega :f^{\sharp}(x)>t \} \rvert _{\ell}\text{ \ for a.e. }t\geq 0.
\end{equation*}
Our symmetrized problem is the following 
\begin{equation}
\label{P_sharp}
    \begin{cases}
        - \div\bigl( \abs{\nabla v}^{p-2} \nabla v \bigr) = f^\sharp(x) \abs{x}^\ell & \text{in } \Omega^{\sharp} \\[1.2ex]
        \displaystyle{\abs{\nabla v}^{p-2} \, \frac{\partial v}{\partial \nu} + \tilde{\beta} (r^{\sharp})^{\frac{\ell}{p'}} \abs{v}^{p-2}v  = 0 } & \text{on } \partial \Omega^{\sharp}
    \end{cases}
\end{equation}
where 

\begin{equation}
    \widetilde{\beta }= \inf_{\partial \Omega} \beta(x)\abs{x}^{-\frac{\ell}{p^{\prime}}}>0,
    \label{B_tilde}
\end{equation}
since $0 \notin \partial \Omega$ and \ref{Ip_beta} holds.
 From the \eqref{B_tilde}, it immediately follows that  
\begin{equation}
    \label{lower_bon}
    \beta (x) \geq \widetilde{\beta } \abs{x}^{\frac{\ell}{p^{\prime }}}.
\end{equation}
Our main results are contained in the following theorems.

\begin{teorema}
    \label{teorema1}
    Assume that assumptions \eqref{p>n}-\eqref{Ip_f} are in force and let $u$ and $v$ be the solutions to problems (\ref{p_originale}) and (\ref{P_sharp}), respectively. Then  we have 
    \begin{equation}
        \hspace{-0.7em} \norma{u}_{L^{1}(\Omega,\abs{x}^\ell dx)}=\int_{\Omega } \abs{u(x)}  \abs{x}^\ell \, dx \leq \int_{\Omega^{\sharp}}  \abs{v(x)} \abs{x}^\ell \, dx=\norma{v}_{L^{1}(\Omega^{\sharp}, \abs{x}^\ell dx)}
        \label{L^1}
    \end{equation}
    and 
    \begin{equation}
        \hspace{-0.8em} \norma{u}_{L^{p}(\Omega, \abs{x}^\ell  dx)}^p=\int_{\Omega } \abs{u(x)}^{p}\abs{x}^{\ell}  dx\leq \int_{ \Omega^{\sharp} } \! \abs{v(x)}^p \abs{x}^\ell  dx = \norma{v}_{L^p (\Omega^{\sharp }, \abs{x}^\ell dx)}^p.
        \label{L^2}
    \end{equation}
\end{teorema}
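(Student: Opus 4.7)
The plan is to adapt the Talenti-type symmetrization scheme used in \cite{ANT,ACNT} to the present weighted, variable-Robin setting, replacing the classical isoperimetric inequality by the double-density one guaranteed by hypotheses \eqref{p>n}--\eqref{Ip_n_l}. For each $t>0$ I would test \eqref{Weak_Form} against $\varphi_t = \mathrm{sign}(u)\,(\abs{u}-t)_+$, whose gradient equals $\nabla u \, \chi_{\{|u|>t\}}$. Setting $\mu(t) = |\{|u|>t\}|_\ell$ and differentiating in $t$, the coarea formula produces, for a.e.\ $t>0$,
\begin{equation*}
\int_{\{|u|=t\}\cap\Omega} |\nabla u|^{p-1}\,d\mathcal{H}^{n-1} + \int_{\{|u|>t\}\cap\partial\Omega} \beta(x)|u|^{p-1}\,d\mathcal{H}^{n-1} = \int_{\{|u|>t\}} f(x)\abs{x}^{\ell}\,dx.
\end{equation*}

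The next step applies H\"older's inequality with weight $\abs{x}^{\ell/p'}$ to the interior integral, together with $-\mu'(t) = \int_{\{|u|=t\}\cap\Omega}\abs{x}^{\ell}|\nabla u|^{-1}\,d\mathcal{H}^{n-1}$, to obtain
\begin{equation*}
\int_{\{|u|=t\}\cap\Omega} |\nabla u|^{p-1}\,d\mathcal{H}^{n-1}\;\geq\; \frac{\Bigl(\int_{\{|u|=t\}\cap\Omega}\abs{x}^{\ell/p'}\,d\mathcal{H}^{n-1}\Bigr)^p}{(-\mu'(t))^{p-1}}.
\end{equation*}
On the boundary integral I would invoke \eqref{lower_bon} and the pointwise bound $|u|>t$ to estimate it from below by $\tilde\beta\,t^{p-1}\int_{\{|u|>t\}\cap\partial\Omega}\abs{x}^{\ell/p'}\,d\mathcal{H}^{n-1}$. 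The weighted isoperimetric inequality for the pair of densities $(\abs{x}^{\ell},\abs{x}^{\ell/p'})$ then bounds the sum of the two perimeter-type integrals from below by $n\omega_n\,r(\mu(t))^{n-1+\ell/p'}$, where $r(m)=\bigl((n+\ell)m/(n\omega_n)\bigr)^{1/(n+\ell)}$. Combined with the Hardy--Littlewood bound $\int_{\{|u|>t\}}f\abs{x}^{\ell}\,dx\leq\int_0^{\mu(t)} f^{\sharp}(\sigma)\,d\sigma$ (with $f^{\sharp}$ regarded as a function of the weighted measure), this produces a first-order differential inequality involving $\mu$, $-\mu'$, and $t$.

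Carrying out the same calculation on the radial solution $v$ to \eqref{P_sharp} yields the analogous identity with \emph{equality}: H\"older is saturated on every level sphere, the isoperimetric inequality is attained by centered balls, and the Robin parameter of \eqref{P_sharp} is chosen precisely so that the boundary estimate becomes an equality on $\partial\Omega^{\sharp}$. A standard comparison/Gronwall argument applied to the distribution functions $\mu$ and $\nu(t)=|\{v>t\}|_\ell$ should then yield the Hardy--Littlewood--P\'olya majorization $\int_0^{s} u^{\sharp}(\sigma)\,d\sigma\leq\int_0^{s} v^{\sharp}(\sigma)\,d\sigma$ for all $s\in[0,|\Omega|_{\ell}]$, whence \eqref{L^1} follows at $s=|\Omega|_\ell$ and \eqref{L^2} by applying the convex function $\Phi(r)=r^p$ to the majorization.

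The main obstacle is the combination step: after the above bounds one is left with a sum $\frac{A^p}{(-\mu')^{p-1}}+\tilde\beta\,t^{p-1}B$ subject only to the isoperimetric constraint $A+B\geq n\omega_n\,r(\mu(t))^{n-1+\ell/p'}$, and it is not \emph{a priori} clear that the infimum of such a split over admissible configurations matches the profile realised by $v$. The resolution is to split the range of $t$ into two regimes according to whether the centered ball of weighted volume $\mu(t)$ lies strictly inside $\Omega^{\sharp}$ or touches $\partial\Omega^{\sharp}$; in each regime the constrained minimum is realised precisely by the configuration of $v$, which is why the choice \eqref{B_tilde} of $\tilde\beta$ is the sharp one. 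Piecing together the two regimes and verifying continuity at the transition level is the technical heart of the argument.
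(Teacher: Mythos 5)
Your opening moves are on the right track and match the paper's preparatory Lemma: testing \eqref{Weak_Form} against truncations, coarea, a weighted H\"older estimate, and the double-density isoperimetric inequality \eqref{eq_isoperimetrica} do produce the differential inequality \eqref{3.2} (with equality for $v$ in \eqref{3.1}). The difficulty you flag at the end — that after these bounds one does not have a clean pointwise ODE comparison — is real, and it is exactly what your sketch does not resolve.

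The genuine gap is the sentence claiming that ``a standard comparison/Gronwall argument applied to $\mu$ and $\phi$ yields the Hardy--Littlewood--P\'olya majorization $\int_0^s u^\sharp\le\int_0^s v^\sharp$ for all $s$, whence \eqref{L^2} follows by convexity.'' That is too strong, and the paper does not (and in this generality cannot) prove it. Unlike the Dirichlet case, the differential inequality \eqref{3.2} carries the extra nonlocal term $\tilde\beta^{-1/(p-1)}\int_{\partial U_t^{\text{ext}}}\abs{x}^{\ell/p'}/u\,d\mathcal{H}^{n-1}$, which is not under pointwise control in $t$. If full HLP majorization were available, the statement of Theorem \ref{teorema1} would immediately upgrade to all $L^q$, $q\ge1$; instead, the paper only asserts $L^1$ and $L^p$, and the Remark after the proof records that only certain Lorentz-norm comparisons hold. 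This is a signal that the pointwise (or even concentration) comparison between $u^\sharp$ and $v$ requires the additional hypotheses of Theorem \ref{teorema2} ($f\equiv 1$, plus either $p=n=2$ or $p>2$ and $\ell\le -n+\frac{p-n}{p-2}$).

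What the paper actually does, and what is missing from your outline, is the following. First one proves that $v_m\ge u_m$, hence $\mu(t)\le\phi(t)=\abs{\Omega}_\ell$ for $t\le v_m$ (inequality \eqref{mf}); this provides the ``initial datum'' for a comparison. Second, one multiplies \eqref{3.2} by the specific weight $t^{p-1}\mu(t)^{\delta_1}$, chosen so that after integrating in $t$ and invoking Lemma \ref{lemma3.3} the nonlocal boundary term collapses to the single constant $C_1=\frac{\abs{\Omega}_\ell^{\delta_1}}{p\tilde\beta^{p/(p-1)}}\bigl(\int_0^{\abs{\Omega}_\ell}f^*\bigr)^{p/(p-1)}$; Lemma \ref{lemma3.3} itself is obtained by Fubini and by testing the weak form with $\varphi\equiv 1$, which is exactly why the weight $t^{p-1}$ is forced and why one cannot get arbitrary $s$ in an HLP-type estimate. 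Third, an integration by parts turns the result into a differential inequality of the precise form $\tau\xi'(\tau)\le(p-1)\xi(\tau)+C$, and the specialized Gronwall Lemma \ref{lemma_Gronwall} is applied from $\tau_0=v_m$; the $L^1$ comparison follows by letting $\tau\to\infty$, and the $L^p$ comparison by a parallel (but more involved) run with the weight $t^{p-1}\mu(t)^{\theta_1}F(\mu(t))$. None of this is covered by ``a standard comparison argument,'' and the two-regime constrained-minimization heuristic in your last paragraph does not substitute for it.
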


If the source term $f(x)$ is constant
we get a stronger  result, namely a pointwise comparison between $ u^\sharp$ and $v$.
\begin{teorema}
    \label{teorema2}
	 Assume that hypotheses \eqref{p>n}-\eqref{Ip_f} are in force and suppose that $f(x) \equiv 1$ in $\Omega$. If either $p=n=2$ or if $p>2$, $n\geq 2$ and
	 \begin{equation}
	 \label{l<n+rob}
	 \displaystyle{\ell \leq -n +\frac{p-n}{p-2}}
	 \end{equation}
	 then
		\begin{equation}
		    \label{7}
		    u^\sharp (x) \leq v(x) \qquad x \in \Omega^\sharp ,
		\end{equation}
		where  $u$ and $v$ are the solutions to \eqref{p_originale} and \eqref{P_sharp} respectively.
\end{teorema}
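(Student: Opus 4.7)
The strategy is to adapt the classical Talenti-type pointwise comparison, as developed for Robin boundary conditions by Alvino–Nitsch–Trombetti \cite{ANT} and for the variable Robin parameter in \cite{ACNT2}, to the weighted $p$-Laplacian setting at hand. Since $f\equiv 1\geq 0$ and $\beta>0$, the weak maximum principle lets us take $u,v\geq 0$, with $v$ radially non-increasing. Fix $t\in(0,\,\mathrm{ess\,sup}\,u)$, use $\varphi=(u-t)_{+}$ as test function in the weak formulation (\ref{Weak_Form}), and differentiate with respect to $t$ using the coarea formula to obtain, for a.e.\ $t$,
\begin{equation*}
\int_{\{u=t\}\cap\Omega}\abs{\nabla u}^{p-1}\,d\mathcal{H}^{n-1}+\int_{\partial\Omega\cap\{u>t\}}\beta(x)\,u^{p-1}\,d\mathcal{H}^{n-1}=\mu(t),
\end{equation*}
where $\mu(t):=\abs{\{u>t\}}_{\ell}$.

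Three ingredients then come into play. First, a weighted H\"older inequality on $\{u=t\}\cap\Omega$, applied to the factorization $\abs{x}^{\ell/p'}=(\abs{x}^{\ell}/\abs{\nabla u})^{1/p'}\,\abs{\nabla u}^{1/p'}$ with exponents $p'$ and $p$, gives the Talenti bound
\begin{equation*}
\int_{\{u=t\}\cap\Omega}\abs{x}^{\ell/p'}\,d\mathcal{H}^{n-1}\leq(-\mu'(t))^{1/p'}\Bigl(\int_{\{u=t\}\cap\Omega}\abs{\nabla u}^{p-1}\,d\mathcal{H}^{n-1}\Bigr)^{1/p}.
\end{equation*}
Second, the lower bound (\ref{lower_bon}) on $\beta$, coupled with the elementary estimate $u\geq t$ on $\partial\Omega\cap\{u>t\}$, dominates the Robin term from below by $\widetilde{\beta}\,t^{p-1}$ times the $\abs{x}^{\ell/p'}$-weighted surface measure of $\partial\Omega\cap\{u>t\}$. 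Third, the double-density isoperimetric inequality (cf.\ \cite{ABCMP,Chiba_Horiuchi,BBMP}), which under (\ref{p>n})--(\ref{Ip_n_l}) is saturated by centered balls, applied to $\{u>t\}\subset\R^{n}$ whose boundary decomposes into an interior piece and a piece lying on $\partial\Omega$, yields
\begin{equation*}
\int_{\{u=t\}\cap\Omega}\abs{x}^{\ell/p'}\,d\mathcal{H}^{n-1}+\int_{\partial\Omega\cap\{u>t\}}\abs{x}^{\ell/p'}\,d\mathcal{H}^{n-1}\geq n\omega_{n}\,r(t)^{n-1+\ell/p'},
\end{equation*}
where $r(t)$ is the radius of the centered ball with weighted volume $\mu(t)$.

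Combining these three bounds with the weak-formulation identity produces a first-order scalar differential inequality for $\mu(t)$. The same computation performed on the symmetrized problem (\ref{P_sharp}) produces the corresponding \emph{equality} for $\mu_{v}(t)$, because $v$ is radial, the weighted isoperimetric inequality is attained by centered balls, and H\"older is attained whenever $\abs{\nabla u}$ is constant on level sets. A standard scalar ODE comparison, anchored at the trivial values of $\mu$ and $\mu_{v}$ at $t=\mathrm{ess\,sup}\,u$, then yields $\mu(t)\leq\mu_{v}(t)$ for every $t\geq 0$, which is equivalent to (\ref{7}) because $v$ coincides with its own weighted decreasing rearrangement.

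The principal obstacle is the boundary piece $\partial\Omega\cap\{u>t\}$. The factor $t^{p-1}$ arising in the second ingredient couples a purely geometric (surface-measure) quantity to the pointwise value of $u$ on the boundary, whereas the isoperimetric inequality is volumetric in nature; the two must be reconciled when the level sets touch $\partial\Omega$. (When they do not, the boundary contribution vanishes and the scheme reduces to the standard weighted Talenti argument.) The quantitative assumption (\ref{l<n+rob}) is precisely what forces the right-hand side of the resulting ODE inequality to have the monotonicity required for the scalar comparison principle to apply, in parallel with the role played by the analogous hypothesis in the Robin $p$-Laplacian analysis of \cite{AGM}. In the borderline case $p=n=2$, H\"older degenerates to Cauchy–Schwarz and the argument simplifies, which is why that case is singled out and no restriction on $\ell$ is needed.
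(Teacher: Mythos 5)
Your proposal correctly identifies the three classical ingredients (the level-set identity from the weak formulation, the weighted H\"older estimate on interior level sets, and the double-density isoperimetric inequality), and the general Talenti-type spirit is right. However, the argument does not close, and the missing steps are precisely the non-trivial ones in the paper.

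First, replacing $u$ by $t$ on $\partial\Omega\cap\{u>t\}$ is a dead end here. After that substitution the differential inequality involves two unknowns: $\mu(t)$ and the boundary surface quantity $B(t)=\int_{\partial\Omega\cap\{u>t\}}\abs{x}^{\ell/p'}\,d\mathcal{H}^{n-1}$. The isoperimetric inequality gives $A+B\ge\gamma_{n,\ell,p}\mu^\alpha$ while H\"older gives $A\le(-\mu')^{1/p'}\bigl(\mu-\tilde\beta t^{p-1}B\bigr)^{1/p}$, and $B$ appears with opposite effects in the two bounds; taking the worst case over $B$ kills the lower bound on $-\mu'$. There is no closed scalar ODE in $\mu$ alone. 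The paper avoids this by \emph{not} estimating $u$ by $t$ pointwise: it keeps the term $\tilde\beta^{-1/(p-1)}\int_{\partial U_t^{\text{ext}}}\abs{x}^{\ell/p'}u^{-1}\,d\mathcal{H}^{n-1}$ intact, multiplies the whole inequality \eqref{3.2} by $t^{p-1}\mu(t)^{\delta_2}$ with $\delta_2=-(\theta_2-1/p)\,p/(p-1)$ chosen so that the left-hand side becomes $\mu$-independent, and \emph{integrates in $t$}, at which point Lemma \ref{lemma3.3} bounds the accumulated boundary contribution by the constant $\abs{\Omega}_\ell^{1+\delta_2}/(p\tilde\beta^{p/(p-1)})$. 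The hypothesis \eqref{l<n+rob} enters exactly here, as the condition $\delta_2\ge0$ needed for the $\mu^{\delta_2}$ weight and the associated monotonicity to work; your proposal asserts that \eqref{l<n+rob} is what makes ``the scalar comparison principle apply'' but never derives the mechanism.

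Second, the anchoring is wrong. You propose to compare $\mu$ and $\phi=\mu_v$ ``at $t=\operatorname{ess\,sup}u$'', but $\sup u$ need not equal $\sup v$, and at that level only $\mu$ is trivially zero, not $\phi$; there is no usable matching condition there. The paper instead runs the Gronwall Lemma \ref{lemma_Gronwall} \emph{forward} from $\tau_0=v_m=\min_{\overline{\Omega^\sharp}}v$, and the comparison at the anchor relies on the preliminary inequality $u_m\le v_m$ established in \eqref{minima_eq}, which gives $\mu(t)\le\phi(t)=\abs{\Omega}_\ell$ for all $t\le v_m$. Without this ingredient the integrated Gronwall comparison has nothing to start from. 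In short: you have the right geometric tools but you are missing the $t$-integrated Gronwall framework (and Lemma \ref{lemma3.3}) that makes the boundary term tractable, and the anchoring at $v_m$ via \eqref{minima_eq} that initializes the comparison.
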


\begin{oss}
  It is not straightforward to compare our estimates with the ones, see \cite{ANT,AGM}, obtained by means of the 
  classical Schwarz symmetrization (which corresponds to choosing $\ell=0$).
  Nevertheless, in the forthcoming  paper  \cite{ACGM}, the authors will address the asymptotic as $p \rightarrow + \infty$ of some inequalities proved in the present note. From such an analysis, it will be clear that, at least for $p$ large enough and for a special class of domains, the weighted ($\ell <0$) and the unweighted ($\ell =0$) estimates are comparable and the first ones are sharper.
  
  We finally note that there is a vast literature on Talenti's type estimates obtained via  weighted rearrangement.  Typically, the weight fulfils an isoperimetric inequality and it appears in the ellipticity of the differential operator that one wants to study. In our case, see also \cite{ACNT2}, the differential operator is the classical 
  $p$-Laplacian and the weight is somehow ``hidden'' in the boundary condition. 
This feature could represent one of the novelties of our approach.
\end{oss}

    \begin{oss}
        A straightforward computation shows that the function $v$, appearing in \eqref{7}, has the following explicit expression
        \[
        v(x) = 
        \frac{p-1}{(\ell+p)(n+\ell)^{\frac{1}{p-1}}} \bigl[ (R^{\sharp})^{\frac{\ell+p}{p-1}} - \abs{x}^{\frac{\ell+p}{p-1}}  \bigr] + \biggl( \frac{(R^{\sharp})^{\frac{\ell}{p}+1}  }{\tilde{\beta}(n+\ell)} \biggr)^{\frac{1}{p-1}}.
        \]
    \end{oss}

The first eigenvalue of the $p$-Laplace operator with the boundary conditions as in \eqref{p_originale} is the minimum of the following Rayleigh quotient

\begin{equation}
    \label{trace}
    \lambda_{1,\beta (x)} (\Omega)= \min_{\substack{\psi \in W^{1,p}(\Omega) \\ \psi\ne 0}} \frac{\displaystyle{\int_{\Omega} \abs{\nabla \psi}^p \, dx +  \int_{\partial \Omega}\beta(x) \abs{\psi}^p \, d\mathcal{H}^{n-1}(x)}}{\displaystyle{\int_{\Omega} \abs{\psi}^p\abs{x}^{\ell}  \, dx}}.
\end{equation}
Finally, Theorem \ref{teorema1} allows us to prove the following Faber-Krahn inequality (see also \cite{ANT,Bos,BD,BG,BG2,BGT,CG,FK,FNT} and the references therein).
\begin{teorema}
\label{Faber_Krahn}
    Assume hypotheses \eqref{p>n}-\eqref{Ip_beta} and let $\tilde{\beta}$ be the constant defined in \eqref{B_tilde}, then we have
    \begin{equation*}
        \lambda_{1,\beta(x)} (\Omega) \geq  \lambda_{1,\tilde{\beta}}(\Omega^\sharp).
    \end{equation*}
\end{teorema}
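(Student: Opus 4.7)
This is the standard Faber--Krahn argument adapted to the weighted setting: use the first eigenfunction on $\Omega$ as the ``source'' of a Robin $p$-Poisson problem of the form (\ref{p_originale}), apply Theorem \ref{teorema1} to compare it with the solution $v$ of the symmetrized problem (\ref{P_sharp}), and then test the Rayleigh quotient defining $\lambda_{1,\tilde\beta}(\Omega^\sharp)$ with $v$. Concretely, let $u\in W^{1,p}(\Omega)$, $u\ge 0$, be a minimizer of (\ref{trace}) and set $\lambda:=\lambda_{1,\beta(x)}(\Omega)$. The Euler--Lagrange equation for the functional in (\ref{trace}) makes $u$ a weak solution of (\ref{p_originale}) with $f(x)=\lambda\,u(x)^{p-1}$; this $f$ is nonnegative and, since $u\in L^p(\Omega,\abs{x}^\ell dx)$, lies in $L^{p'}(\Omega,\abs{x}^\ell dx)$, so (\ref{Ip_f}) holds. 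Let $v$ be the radial solution of (\ref{P_sharp}) with right--hand side $f^\sharp\abs{x}^\ell=\lambda(u^\sharp)^{p-1}\abs{x}^\ell$, using that the weighted rearrangement commutes with the positive constant $\lambda$ and the monotone map $t\mapsto t^{p-1}$.

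The second step is to evaluate the numerator of the Rayleigh quotient for $\lambda_{1,\tilde\beta}(\Omega^\sharp)$ at the admissible test function $v$. Choosing $\varphi=v$ in the weak formulation \eqref{Weak_Form} for problem (\ref{P_sharp}) produces the identity
\[
\int_{\Omega^\sharp}\abs{\nabla v}^p\,dx+\int_{\partial\Omega^\sharp}\tilde\beta\,(r^\sharp)^{\ell/p'}v^p\,d\mathcal{H}^{n-1}(x)=\lambda\int_{\Omega^\sharp}(u^\sharp)^{p-1}v\,\abs{x}^\ell\,dx,
\]
and by Hölder's inequality in $L^p(\Omega^\sharp,\abs{x}^\ell dx)$ the right--hand side is bounded by $\lambda\,\norma{u^\sharp}^{p-1}_{L^p(\Omega^\sharp,\abs{x}^\ell dx)}\norma{v}_{L^p(\Omega^\sharp,\abs{x}^\ell dx)}$.

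The third and final step is to close the argument. Equimeasurability of the sharp rearrangement yields $\norma{u^\sharp}_{L^p(\Omega^\sharp,\abs{x}^\ell dx)}=\norma{u}_{L^p(\Omega,\abs{x}^\ell dx)}$, and the $L^p$-comparison \eqref{L^2} of Theorem \ref{teorema1} gives $\norma{u}_{L^p(\Omega,\abs{x}^\ell dx)}\le\norma{v}_{L^p(\Omega^\sharp,\abs{x}^\ell dx)}$. Chaining all the inequalities, the numerator above is dominated by $\lambda\,\norma{v}^p_{L^p(\Omega^\sharp,\abs{x}^\ell dx)}$; dividing by this denominator gives exactly $\lambda_{1,\tilde\beta}(\Omega^\sharp)\le\lambda=\lambda_{1,\beta(x)}(\Omega)$.

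I do not expect any real obstacle: existence, nonnegativity and $W^{1,p}$ regularity of a minimizer of (\ref{trace}) are standard, and the verification $\lambda u^{p-1}\in L^{p'}(\Omega,\abs{x}^\ell dx)$ is immediate. The only point to state carefully is that $\lambda_{1,\tilde\beta}(\Omega^\sharp)$ is to be understood as the first eigenvalue associated with the symmetrized problem (\ref{P_sharp}), so its Rayleigh quotient carries the boundary weight $\tilde\beta(r^\sharp)^{\ell/p'}$ (which is constant on $\partial\Omega^\sharp$); with this reading, the weak formulation used for $v$ matches the numerator tested above and the chain of inequalities closes cleanly.
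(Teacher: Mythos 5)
Your proof is correct and follows essentially the same route as the paper: take the nonnegative minimizer $u$ of \eqref{trace} as a solution of \eqref{p_originale} with $f=\lambda u^{p-1}$, apply the $L^p$-comparison \eqref{L^2} of Theorem \ref{teorema1}, and combine the weak formulation of the symmetrized problem (tested against its own solution) with H\"older's inequality to bound the Rayleigh quotient. The only difference from the paper is purely presentational — you bound the numerator directly and divide, whereas the paper keeps the ratio and bounds the denominator — and your careful note about interpreting $\lambda_{1,\tilde\beta}(\Omega^\sharp)$ with boundary weight $\tilde\beta(r^\sharp)^{\ell/p'}$ matches what the paper implicitly does.
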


\vspace{0.3 cm}
The paper is organized as follows. In the next section, we recall some basic notion from the theory of rearrangements and  the weighted isoperimetric inequalities needed in the sequel. Moreover, we list some properties of the solutions to problems \eqref{p_originale} and \eqref{P_sharp}.  Section \ref{section_3} is devoted to the  proofs of Theorem \ref{teorema1} and Theorem \ref{teorema2}. 
Finally, in Section \ref{section_4}, we prove the Faber-Krahn inequality.





\section{Preliminary result}
\label{section_2}

\begin{definizione}
    Let $\Omega$ be a Lebesgue measurable subset of $\R^n$ and let $\ell \in (-n,0)$. We define the $\ell$-weighted measure of $\Omega$ as
    \begin{equation}
        \label{def_misura_l}
        \abs{\Omega}_{\ell} : = \int_{\Omega} \abs{x}^\ell \, dx,
    \end{equation}
    and the $k$-weighted perimeter of $\Omega$ as
    \begin{equation}
        \label{def_perimetro_k}
        P_{k}(\Omega) =
        \begin{cases}
            \displaystyle{\int_{\partial \Omega} \abs{x}^k \, d\mathcal{H}^{n-1}} & \text{ if } \Omega \text{ is } (n-1)\text{-rectifiable}\\[3ex]
            +\infty & \text{otherwise}.
        \end{cases}
      \end{equation}
\end{definizione}

For this family of measures, an isoperimetric inequality holds true  if $k$, $\ell$ and $n$ are suitably related.
Here we need the following result
(see either \cite{Chiba_Horiuchi} Theorem 1.3 or  \cite{ABCMP} Theorem 1.1 case \textsl{ii}).
\begin{teorema}
    \label{teo_disug_isoperimetrica}
    If assumptions  \eqref{p>n} and \eqref{Ip_n_l} hold true, then
    \begin{equation}
        \label{palla_minore_perimetro}
        P_{\frac{\ell}{p'}} (\Omega) \geq P_{\frac{\ell}{p'}} (\Omega^{\sharp}),
    \end{equation}
    where $\Omega^{\sharp}$ is the centered ball with same $\ell$-measure as $\Omega$.
\end{teorema}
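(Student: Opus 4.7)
The plan is to establish the weighted isoperimetric inequality \eqref{palla_minore_perimetro} by a two-step reduction: first to star-shaped smooth domains via a symmetrization that preserves the $\ell$-weighted volume and decreases the $(\ell/p')$-weighted perimeter, and then to the centered ball $\Omega^\sharp$ by a one-dimensional comparison.

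First I would reduce to smooth domains by the standard approximation of sets of finite perimeter by smooth ones. Next I would apply a spherical symmetrization of $\Omega$ around the origin: for each $r>0$, replace $\Omega\cap\partial B_r$ by a spherical cap of equal $(n-1)$-dimensional Hausdorff measure centered at a fixed pole. Since the volume weight $|x|^\ell$ is radial, $|\Omega|_\ell$ is unchanged. Using the coarea formula applied to the radial function $x\mapsto|x|$ together with the isoperimetric inequality on each sphere $\partial B_r$, one checks that the $(\ell/p')$-weighted perimeter does not increase.

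For the resulting star-shaped domain, parameterized by $\rho:\S^{n-1}\to(0,\infty)$ with $\Omega=\{t\theta : 0\leq t\leq\rho(\theta)\}$, one has in polar coordinates
\begin{align*}
|\Omega|_\ell &= \frac{1}{n+\ell}\int_{\S^{n-1}}\rho(\theta)^{n+\ell}\,d\mathcal{H}^{n-1}(\theta),\\
P_{\ell/p'}(\Omega) &\geq \int_{\S^{n-1}}\rho(\theta)^{n-1+\ell/p'}\,d\mathcal{H}^{n-1}(\theta),
\end{align*}
where the lower bound on the perimeter comes from estimating the surface-area element below by its radial component. The target inequality then reduces to a Jensen/power-mean comparison between these two radial integrals under the constraint $|\Omega|_\ell=|\Omega^\sharp|_\ell$, and the hypothesis $p\geq n$ is used here to pin down the exponent arithmetic so that the comparison runs in the desired direction.

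The hard part will be the monotonicity of $P_{\ell/p'}$ under the spherical symmetrization of the first step: this rests on a weighted P\'olya--Szeg\H o-type inequality in which the interplay between the perimeter weight $|x|^{\ell/p'}$ and the volume weight $|x|^\ell$ is delicate, and the structural assumptions $p\geq n$ and $-n<\ell<0$ are precisely what is needed for the centered ball to be the minimizer (for smaller $p$ or different signs of $\ell$ the optimal set is no longer a centered ball). Detailed implementations of this scheme, either by a direct radial reduction as in \cite{Chiba_Horiuchi} or via an optimal-transport argument (Brenier map together with Monge--Amp\`ere) as in \cite{ABCMP}, provide two self-contained proofs that I would follow.
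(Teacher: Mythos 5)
The paper does not prove Theorem~\ref{teo_disug_isoperimetrica}; it invokes it directly from \cite{Chiba_Horiuchi} (Theorem~1.3) and \cite{ABCMP} (Theorem~1.1, case~(ii)), so your sketch has to stand on its own. The final reduction as written does not. Set $a=n+\ell>0$ and $b=n-1+\ell/p'$. After discarding the tangential factor $\sqrt{1+|\nabla_{\S^{n-1}}\rho|^2/\rho^2}\ge 1$ from the area element, your last step amounts to the claim
\[
\frac{1}{\sigma_{n-1}}\int_{\S^{n-1}}\rho^{\,b}\,d\mathcal H^{n-1}\ \ge\ \left(\frac{1}{\sigma_{n-1}}\int_{\S^{n-1}}\rho^{\,a}\,d\mathcal H^{n-1}\right)^{b/a},\qquad \sigma_{n-1}=\mathcal H^{n-1}(\S^{n-1}).
\]
Under \eqref{p>n}--\eqref{Ip_n_l} one always has $\ell>-n\ge-p$, hence $b/a<1$, and on a large sub-range of the hypotheses also $b>0$ (for instance always when $p=n$, where $b=(n-1)(n+\ell)/n>0$). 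But for $0<b/a<1$ the map $s\mapsto s^{b/a}$ is concave, so Jensen yields exactly the \emph{reverse} inequality. Concretely, take $p=n=2$, $\ell=-1$: then $a=1$, $b=1/2$, and Cauchy--Schwarz gives $\frac{1}{2\pi}\int_0^{2\pi}\rho^{1/2}\,d\theta\le\bigl(\frac{1}{2\pi}\int_0^{2\pi}\rho\,d\theta\bigr)^{1/2}$, with equality only when $\rho$ is constant. In other words, once you drop the gradient contribution, the centered ball \emph{maximizes} your truncated perimeter functional under the $\ell$-volume constraint; the isoperimetric inequality you want lives precisely in the competition between this Jensen defect and the $|\nabla_{\S^{n-1}}\rho|$ term you threw away, which is the actual content of the computations in \cite{Chiba_Horiuchi} and \cite{ABCMP} and cannot be shortcut.

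There is a secondary gap as well: spherical cap symmetrization about a pole produces an axially symmetric set, not one that is star-shaped with respect to the origin. The angular radius of the cap $C_r\subset\partial B_r$ need not be monotone in $r$, so the symmetrized set is in general not a polar graph $\{t\theta:\,0\le t\le\rho(\theta)\}$, and the polar-coordinate expressions you write for $\abs{\Omega}_\ell$ and $P_{\ell/p'}(\Omega)$ do not apply to it. The two references you name avoid this difficulty entirely: Chiba--Horiuchi argue through the one-dimensional profile of $|x|$ on $\Omega$ keeping the derivative term, while \cite{ABCMP} use a mass-transport (Brenier/Monge--Amp\`ere) argument; neither passes through cap symmetrization plus Jensen.
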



\begin{oss}
    Since $\lvert \Omega^{\sharp} \rvert_{\ell}$ and $P_k(\Omega^{\sharp})$ can be explicitly computed, is it possible to rewrite \eqref{palla_minore_perimetro} in the following equivalent way
    \begin{equation}
        \label{eq_isoperimetrica}
        P_{\frac{\ell}{p'}}(\Omega) \geq \gamma_{n,\ell,p} \, \abs{\Omega}_{\ell}^{\frac{\ell(p-1)+(n-1)p}{p(n+\ell)}},
    \end{equation}
    where
    \[
    \gamma_{n,\ell,p} = (n\omega_n)^{\frac{\ell+p}{p(n+\ell)}} (\ell+n)^{\frac{\ell(p-1)+(n-1)p}{p(n+\ell)}}
    \]
    and $\omega_n$ is the Lebesgue measure of the unit ball of $\R^n$.
\end{oss}

\begin{definizione}
	Let $u: \Omega \to \R$ be a measurable function, the \emph{weighted distribution function} of $u$ is the function $\mu_{\ell} : [0,+\infty[\, \to [0, +\infty[$ defined by
	\[
	\mu_{\ell}(t)= \abs{\Set{x \in \Omega \, :\,  \abs{u(x)} > t}}_{\ell}.
	\]
\end{definizione}
In the following, we will omit the $\ell$ and just write $\mu$.

\begin{definizione}
	Let $u: \Omega \to \R$ be a measurable function, the \emph{weighted decreasing rearrangement} of $u$, denoted by $u^\ast$, is the distribution function of $\mu $. 
	
    The \emph{weighted rearrangement} of $u$ is the function $u^\sharp $ whose level sets are centered balls with the same $\ell$-measure as the level sets of $\abs{u}$. More precisely,
    \[
    u^\sharp (x)= u^* \bigl( \, \bigl \lvert B_{\abs{x}} \bigr \rvert_{\ell} \, \bigr) = u^* \biggl( \frac{n \omega_n}{\ell+n} \, \abs{x}^{\ell+n} \biggr).
    \]
    with $B_{\abs{x}}$ the centered ball of radius $\abs{x}$.
\end{definizione}

\begin{definizione}
    If $p \in [1,+\infty)$ we will denote by $L^p(\Omega, \abs{x}^\ell \, dx)$ the space of all measurable functions such that
    \[
    \norma{u}_{L^p(\Omega, \abs{x}^\ell \, dx)} := \biggl( \int_{\Omega} \abs{u}^p \abs{x}^\ell \, dx \biggr)^{\frac{1}{p}} < + \infty.
    \]
\end{definizione}

It is easily checked that $u$, $u^*$ e $u^\sharp$ are equi-distributed, hence
\begin{equation}
\label{equi}
\displaystyle{\norma{u}_{L^p(\Omega, \abs{x}^\ell \, dx)}=\norma{u^*}_{L^p(0, \abs{\Omega}_{\ell})}=\lVert{u^\sharp}\rVert_{L^p(\Omega^\sharp, \abs{x}^\ell \, dx)}} \qquad \forall \, 1 \leq p <+\infty.
\end{equation}

We also recall the definition of weighted Lorentz spaces.
\begin{definizione}
    Let $0<p<+\infty$ and $0<q\leq +\infty$. The weighted Lorentz space $L^{p,q}(\Omega, \abs{x}^\ell \, dx)$ is the space of those functions $f(x)$ such that the quantity
    \begin{equation}
    \label{deff_norma_lorentz}
    \norma{g}_{L^{p,q}(\Omega, \abs{x}^\ell \, dx)} :=
        \begin{cases}
            \displaystyle{ p^{\frac{1}{q}} \biggl( \int_{0}^{\infty}  t^q \mu(t)^{\frac{q}{p}}\, \frac{dt}{t} \biggr)^{\frac{1}{q}}} & 0<q<\infty\\[2ex] \displaystyle{\sup_{t>0} \, (t^p \mu(t))} & q=\infty
    	\end{cases}
    \end{equation}
    is finite.
\end{definizione}
Let us observe that for $p=q$ the Lorentz space coincides with the $L^p(\Omega, \abs{x}^\ell \, dx)$ space, as a consequence of the well-known \emph{Cavalieri's Principle}
\[
\int_\Omega \abs{g}^p \abs{x}^\ell \, dx = p \int_0^{+\infty} t^{p-1} \mu(t) \, dt.
\]
By  the definition of decreasing rearrangement, the following result holds true  (see for instance \cite{Chong_Rice}, \cite{Kawohl_rearrangements_and_convexity} and \cite{Kesavan_symmetrization_e_applicat}).
\begin{prop}
    Let $u \in L^1(\Omega, \abs{x}^\ell \, dx)$ be a non negative function and let $E \subset \Omega$ be a measurable set, then it holds
    \begin{equation}
    \label{E}
        \int_{E} u(x) \abs{x}^\ell \, dx \leq \int_0^{\abs{E}_{\ell}} u^*(s) \, ds.
    \end{equation}
\end{prop}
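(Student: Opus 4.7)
The plan is to prove this weighted Hardy--Littlewood-type inequality by reducing it to the classical Hardy--Littlewood inequality for the measure $d\mu_\ell := |x|^\ell\,dx$, which is a well-defined Borel measure on $\Omega$ since $\ell \in (-n,0)$ makes $|x|^\ell$ locally integrable on $\R^n$. The key observation is that the weighted decreasing rearrangement $u^*$, as defined in the previous definition, is precisely the classical decreasing rearrangement of $u$ with respect to $d\mu_\ell$; in particular $u^*$ is nonincreasing, right-continuous, and equidistributed with $u$ in the sense that $|\{u>t\}|_\ell = |\{u^*>t\}|$ for every $t\geq 0$.

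First, I would apply the layer-cake representation to the nonnegative function $u$, writing
\begin{equation*}
    \int_E u(x)\, |x|^\ell\, dx = \int_0^{+\infty} \bigl| \{x\in E : u(x) > t\}\bigr|_\ell\, dt.
\end{equation*}
Next, I would use the elementary bound
\begin{equation*}
    \bigl|\{x\in E : u(x) > t\}\bigr|_\ell \leq \min\bigl( \mu(t), |E|_\ell \bigr),
\end{equation*}
which is immediate since $\{x\in E : u(x)>t\} \subset \{u>t\} \cap E$ and the weighted measure is monotone.

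The second step is to identify
\begin{equation*}
    \int_0^{+\infty} \min\bigl( \mu(t), |E|_\ell \bigr)\, dt = \int_0^{|E|_\ell} u^*(s)\, ds.
\end{equation*}
This follows from Fubini applied to the set $\{(t,s) : 0 < s < \mu(t),\ 0 < s < |E|_\ell \}$: integrating first in $t$ and using that $u^*(s) = |\{t>0 : \mu(t) > s\}|$ (which is the defining property of the decreasing rearrangement, $u^*$ being the distribution function of $\mu$) yields the claimed identity for $s \in (0,|E|_\ell)$. Combining this identity with the layer-cake bound concludes the proof.

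The only subtle point is the identity between the two single-variable integrals, and more specifically the interpretation of $u^*$ as the right-continuous generalized inverse of $\mu$, but this is built into the definition already stated. The rest is essentially a double application of Fubini's theorem to nonnegative integrands, so no delicate estimate is required.
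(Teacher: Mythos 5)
Your proof is correct. The paper does not actually prove this proposition; it simply invokes it as a standard consequence of the definition of decreasing rearrangement and cites the references \cite{Chong_Rice}, \cite{Kawohl_rearrangements_and_convexity}, \cite{Kesavan_symmetrization_e_applicat}. Your argument is the classical Hardy--Littlewood proof transported to the measure $|x|^\ell\,dx$: the layer-cake formula gives $\int_E u\,|x|^\ell\,dx = \int_0^\infty |\{x\in E : u>t\}|_\ell\,dt$, the monotonicity of $|\cdot|_\ell$ gives the pointwise bound by $\min(\mu(t),|E|_\ell)$, and Fubini applied to $\{(t,s): 0<s<\mu(t),\ 0<s<|E|_\ell\}$ identifies $\int_0^\infty \min(\mu(t),|E|_\ell)\,dt$ with $\int_0^{|E|_\ell} u^*(s)\,ds$, using exactly the paper's definition of $u^*$ as the distribution function of $\mu$. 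The argument is complete and self-contained; no gap.
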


Now we turn our attention on  problems \eqref{p_originale} and \eqref{P_sharp}.
\begin{prop}
    \label{proposizione_2}
    If the assumptions \eqref{Ip_n_l} and \eqref{Ip_beta} are fulfilled  and if $\Omega$ is a bounded and Lipschitz domain of $\R^n$ then $W^{1,p}(\Omega)$ is compactly embedded in $L^p(\Omega, \abs{x}^\ell \, dx)$.
\end{prop}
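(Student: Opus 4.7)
The plan is to lift the classical Rellich--Kondrachov compactness from $L^p(\Omega)$ to the weighted space $L^p(\Omega,\abs{x}^{\ell}\,dx)$ by means of a cutoff around the singularity of the weight at the origin. The two ingredients that make this work are that $\abs{x}^{\ell}$ is locally integrable to a power strictly larger than $1$ (thanks to \eqref{Ip_n_l}) and that $W^{1,p}(\Omega)$ embeds continuously into every $L^{r}(\Omega)$ with $r<+\infty$ (thanks to \eqref{p>n}).

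Concretely, let $\{u_k\}$ be a bounded sequence in $W^{1,p}(\Omega)$. The usual Rellich--Kondrachov theorem on bounded Lipschitz domains produces a subsequence, still denoted $\{u_k\}$, converging strongly in every $L^{r}(\Omega)$, $r\in[1,+\infty)$, to some $u\in W^{1,p}(\Omega)$. For $\varepsilon>0$ I would split
\begin{equation*}
\int_{\Omega} \abs{u_k-u}^p \abs{x}^{\ell}\,dx = \int_{\Omega\cap B_\varepsilon} \abs{u_k-u}^p \abs{x}^{\ell}\,dx + \int_{\Omega\setminus B_\varepsilon} \abs{u_k-u}^p \abs{x}^{\ell}\,dx .
\end{equation*}
Since $\ell<0$, on $\Omega\setminus B_\varepsilon$ the weight is bounded above by $\varepsilon^{\ell}$, so the outer piece is controlled by $\varepsilon^{\ell}\norma{u_k-u}_{L^p(\Omega)}^p$, which vanishes as $k\to+\infty$ for $\varepsilon$ fixed. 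For the inner piece, I would pick conjugate exponents $q,q'\in(1,+\infty)$ with $q'<n/\abs{\ell}$, which is possible since $\abs{\ell}<n$, and apply Hölder's inequality to get
\begin{equation*}
\int_{\Omega\cap B_\varepsilon} \abs{u_k-u}^p \abs{x}^{\ell}\,dx \leq \norma{u_k-u}_{L^{pq}(\Omega)}^p \left(\int_{B_\varepsilon} \abs{x}^{\ell q'}\,dx\right)^{1/q'}.
\end{equation*}
The $L^{pq}$-norms are uniformly bounded in $k$ by the Sobolev embedding, while $\int_{B_\varepsilon}\abs{x}^{\ell q'}\,dx\to 0$ as $\varepsilon\to 0^{+}$ since $\ell q'>-n$. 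Letting first $k\to+\infty$ and then $\varepsilon\to 0^{+}$ yields $u_k\to u$ in $L^p(\Omega,\abs{x}^{\ell}\,dx)$.

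The delicate point is the simultaneous balance in the choice of the Hölder exponent $q'$: it must be $>1$ for the inequality to be useful, strictly less than $n/\abs{\ell}$ so that $\abs{x}^{\ell q'}$ remains locally integrable, and its conjugate $q$ must fall inside the Sobolev embedding range. The assumption $p\geq n$ is precisely what makes the third condition automatic for every finite $q$, and is the reason why the full range $\ell\in(-n,0)$ is admissible here; in a less benign setting (for instance $1<p<n$) one would be forced to impose a stricter lower bound on $\ell$.
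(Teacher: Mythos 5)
Your proof is correct and rests on the same two ingredients the paper uses: the compact embedding $W^{1,p}(\Omega)\hookrightarrow L^{r}(\Omega)$ for every finite $r$ (valid since $p\geq n$) and a H\"older estimate with the dual exponent chosen so that $\abs{x}^{\ell q'}$ stays integrable near the origin, which requires exactly $q'<n/\abs{\ell}$ and is possible because $\ell>-n$. The only difference is cosmetic: the paper applies H\"older once over all of $\Omega$, which directly yields the continuous embedding $L^{q}(\Omega)\hookrightarrow L^{p}(\Omega,\abs{x}^{\ell}\,dx)$ for $q$ large, and then composes it with Rellich--Kondrachov; you instead split into $\Omega\cap B_\varepsilon$ and $\Omega\setminus B_\varepsilon$ and pass to the limit in two stages. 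Your splitting is sound but unnecessary here, since you already observed that $u_k\to u$ strongly in every $L^{r}(\Omega)$ with $r<\infty$, so the single H\"older bound $\int_\Omega\abs{u_k-u}^p\abs{x}^\ell\,dx\leq\norma{u_k-u}_{L^{pq}(\Omega)}^p\bigl(\int_\Omega\abs{x}^{\ell q'}\,dx\bigr)^{1/q'}$ already goes to zero with $k$ without any $\varepsilon$-ball.
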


\begin{prop}
    \label{proposizione_3}
     Problems \eqref{p_originale} and \eqref{P_sharp} admit a unique solution.
\end{prop}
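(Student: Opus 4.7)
The plan is to prove both existence and uniqueness via the direct method of the calculus of variations, since both problems are the Euler--Lagrange equations of strictly convex, coercive energies on $W^{1,p}(\Omega)$ (resp.\ $W^{1,p}(\Omega^\sharp)$). Since problem \eqref{P_sharp} has the same structure as \eqref{p_originale}, it suffices to treat \eqref{p_originale}; I will just carry through the argument with the general weight $\beta(x)$.

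First I would introduce the functional
\[
J(\varphi)=\frac{1}{p}\int_{\Omega}|\nabla\varphi|^{p}\,dx+\frac{1}{p}\int_{\partial\Omega}\beta(x)|\varphi|^{p}\,d\mathcal{H}^{n-1}-\int_{\Omega}f(x)|x|^{\ell}\varphi\,dx
\]
defined on $W^{1,p}(\Omega)$. Its critical points are precisely the weak solutions in the sense of \eqref{Weak_Form}. The gradient term is obviously finite; the boundary term is finite because $\beta\in L^{\infty}(\partial\Omega)$ by \eqref{Ip_beta} and because the trace operator $W^{1,p}(\Omega)\to L^{p}(\partial\Omega)$ is continuous on Lipschitz domains; the linear term is finite by Hölder's inequality together with \eqref{Ip_f} and Proposition \ref{proposizione_2}, which gives $\varphi\in L^{p}(\Omega,|x|^{\ell}\,dx)$.

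Next I would establish coercivity. Using $\beta(x)\geq m>0$ and the trace inequality, one shows that
\[
\vertiii{\varphi}:=\Bigl(\int_{\Omega}|\nabla\varphi|^{p}\,dx+\int_{\partial\Omega}\beta(x)|\varphi|^{p}\,d\mathcal{H}^{n-1}\Bigr)^{1/p}
\]
is an equivalent norm on $W^{1,p}(\Omega)$; indeed the domination of $\|\cdot\|_{W^{1,p}}$ by $\vertiii{\cdot}$ follows from a standard contradiction/compactness argument that, by the compact embedding $W^{1,p}(\Omega)\hookrightarrow L^{p}(\partial\Omega)$, rules out a minimising sequence with $\int|\nabla\varphi_k|^p\to 0$ and $\int_{\partial\Omega}\beta|\varphi_k|^p\to 0$ while $\|\varphi_k\|_{W^{1,p}}=1$. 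Combining this equivalence with Hölder applied to the source term yields $J(\varphi)\geq C_1\|\varphi\|_{W^{1,p}}^{p}-C_{2}\|\varphi\|_{W^{1,p}}$, hence coercivity on $W^{1,p}(\Omega)$.

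Finally, I would note that $\varphi\mapsto\int|\nabla\varphi|^{p}$ is sequentially weakly l.s.c.\ as a convex continuous functional, that $\varphi\mapsto\int_{\partial\Omega}\beta|\varphi|^{p}$ is weakly continuous by compactness of the trace $W^{1,p}(\Omega)\to L^{p}(\partial\Omega)$ combined with $\beta\in L^{\infty}$, and that the source term is weakly continuous by the compact embedding of Proposition \ref{proposizione_2}. A minimising sequence therefore converges (along a subsequence) weakly in $W^{1,p}$ to a minimiser, giving existence. Uniqueness follows from the strict convexity of $J$: the map $t\mapsto|t|^{p}$ is strictly convex for $p\geq n\geq 2$, so both $\int|\nabla\varphi|^{p}$ (by strict convexity in the gradient for $p>1$) and the boundary term are strictly convex, and the linear term does not break strict convexity. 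I expect the main obstacle to be the coercivity/norm--equivalence step, which is where the assumption $m>0$ on the infimum of $\beta$ is genuinely used; everything else is a direct application of standard variational machinery.
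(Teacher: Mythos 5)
Your proposal is correct and follows essentially the same route as the paper: the direct method applied to the energy functional $J$ (which the paper calls $G$), with coercivity obtained from the trace term, weak lower semicontinuity of the gradient term, and weak continuity (via compactness) of the boundary and source terms. Two minor remarks. First, for coercivity the paper simply invokes ``the well-known trace embedding Theorem'' together with Young's inequality, whereas you spell out the underlying Friedrichs-type norm equivalence
\[
\Bigl(\int_{\Omega}|\nabla\varphi|^{p}\,dx+\int_{\partial\Omega}\beta(x)|\varphi|^{p}\,d\mathcal{H}^{n-1}\Bigr)^{1/p}\simeq \norma{\varphi}_{W^{1,p}(\Omega)}
\]
by a contradiction/compactness argument; both are fine, yours is more explicit. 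Second, for uniqueness the paper cites \cite{Belloni_Kawohl, Diaz_Saa}, while you give a self-contained strict-convexity argument, which is actually cleaner here because the source term is independent of $u$. One small imprecision worth fixing in your phrasing: neither $\frac1p\int_{\Omega}|\nabla\varphi|^{p}$ nor $\frac1p\int_{\partial\Omega}\beta|\varphi|^{p}$ is strictly convex on $W^{1,p}(\Omega)$ by itself (the first is insensitive to additive constants, the second sees only the trace). It is their \emph{sum} that is strictly convex: equality in the convexity inequality forces $\nabla u=\nabla v$ a.e.\ in the connected set $\Omega$, so $u-v$ is constant, and simultaneously $u=v$ a.e.\ on $\partial\Omega$ (using $\beta\geq m>0$ and strict convexity of $t\mapsto|t|^{p}$), which kills the constant. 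With that reformulation the uniqueness step is airtight, and since $J$ is convex every critical point is a minimizer, so the unique minimizer is the unique weak solution.
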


The proofs of the above propositions  will be postponed in the Appendix, since they require rather standard arguments.





Let us observe that $u \geq 0$ in $\Omega$. Indeed, choosing $\psi = u^- = \max \Set{- u, 0}$ as test function in \eqref{Weak_Form}, we have
\[
0 \geq - \int_{\Omega} \lvert \nabla u^- \rvert^p \, dx - \int_{\partial \Omega} \beta(x) (u^-)^p \, d\mathcal{H}^{n-1}(x) = \int_{\Omega} (u^-) f \abs{x}^\ell \, dx,
\]
thus $u^- = 0$ a.e. in $\Omega$.

Furthermore we have 
\begin{equation}
		\label{minima_eq}
		u_m = \inf_{\Omega} u \leq  \min_{\overline{\Omega^\sharp}} v= v_m.
	\end{equation}
Indeed, using the fact that $v(x) = v_m$ $\forall x \in \partial \Omega^{\sharp}$ , taking into account of \eqref{equi} and choosing $\psi \equiv 1$ in the weak formulation of \eqref{p_originale} and \eqref{P_sharp}, respectively, we get
\begin{align*}
    v_m^{p-1}  P_{\frac{\ell}{p'}}(\Omega^\sharp) &= \int_{\partial \Omega^\sharp}\abs{x}^{\frac{\ell}{p'}} v(x)^{p-1} \, d\mathcal{H}^{n-1}(x)= \frac{1}{\tilde{\beta}}\int_{\Omega^\sharp} f^\sharp \abs{x}^\ell\, dx \\
    &=\frac{1}{\tilde{\beta}} \int_{\Omega} f \abs{x}^\ell \, dx=\frac{1}{\tilde{\beta}}\int_{\partial \Omega} \beta(x) u(x)^{p-1} \, d\mathcal{H}^{n-1}(x).
\end{align*}
In turn, by \eqref{lower_bon} and using the isoperimetric inequality  \eqref{palla_minore_perimetro}, we get
\begin{align*}
    \frac{1}{\tilde{\beta}}\int_{\partial \Omega} \beta(x) u(x)^{p-1} \, d\mathcal{H}^{n-1}(x) & \geq  u_m^{p-1}  \int_{\partial \Omega}  \abs{x}^{\frac{\ell}{p'}} \, d\mathcal{H}^{n-1}(x) \\
    & = u_m^{p-1} P_{\frac{\ell}{p'}}(\Omega) \geq  u_m^{p-1} P_{\frac{\ell}{p'}}(\Omega^\sharp).
\end{align*}
Gathering the inequalities above, we conclude that
\[
    v_m^{p-1} P_{\frac{\ell}{p'}}(\Omega^\sharp) \geq u_m^{p-1} P_{\frac{\ell}{p'}}(\Omega^\sharp)
\]
and  the claim follows.

Furthermore inequality \eqref{minima_eq} implies that 
\begin{equation}
	\label{mf}
	\mu (t) \leq \phi (t) = \abs{\Omega} _{\ell}\quad \forall t \leq v_m.
\end{equation}




\subsection{Some useful Lemmata}
Let $u$ be the solution to \eqref{p_originale}. For $t\geq 0$, we define
\[
    U_t=\left\lbrace x\in \Omega : u(x)>t\right\rbrace \quad \partial U_t^{int}=\partial U_t \cap \Omega, \quad \partial U_t^{ext}=\partial U_t \cap \partial\Omega,
\]
and
\[
    \mu(t)=\abs{U_t}_{\ell} \quad P_u(t)=P_{\frac{\ell}{p'}}(U_t).
\]

Analogously, if $v$ is the solution to \eqref{P_sharp}, we set
\[
    V_t=\{ x\in \Omega^\sharp : v(x)> t \}, \quad \phi(t)=\abs{V_t}_{\ell}, \quad P_v(t)=P_{\frac{\ell}{p'}}(V_t).
\]

As it is easy to check $v(x) \equiv v^\sharp(x)$, 
therefore for $0\leq t\leq v_m$, $V_t=\Omega^\sharp$, while, for
$t >v_m$
$V_t$ is a centered ball contained in $\Omega^{\sharp}$.


\begin{lemma}[Gronwall]
	\label{lemma_Gronwall}
	Let $\xi(t): [\tau_0 , + \infty[ \,\to \R$ be a continuous and differentiable function satisfying, for some non negative constant $C$, the following differential inequality
	\begin{equation}
	    \label{eq_Gronwall_hp}
	    \tau \xi' (\tau ) \leq (p-1) \xi(\tau) + C \quad \forall \tau \geq \tau_0 >0.
	\end{equation}
	Then we have
	\begin{itemize}
		\item[(i)] $\displaystyle{ \xi(\tau) \leq \left(\xi(\tau_0) + \frac{C}{p-1}\right) \left( \frac{\tau}{\tau_0}\right)^{p-1} - \frac{C}{p-1}   \quad \forall \tau \geq \tau_0}$;
		\vspace{0.7em}
		
		\item[(ii)]$\displaystyle{ \xi'(\tau) \leq \left( \frac{(p-1)\xi(\tau_0 )+ C}{\tau_0}\right) \left( \frac{\tau}{\tau_0}\right)^{p-2} \quad \forall \tau \geq \tau_0}$.
	\end{itemize}
\end{lemma}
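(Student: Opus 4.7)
The plan is to treat the hypothesis as a first-order linear differential inequality and integrate it via an integrating factor, then deduce (ii) by plugging (i) back into the original bound.

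To prove (i), I would first divide \eqref{eq_Gronwall_hp} by $\tau>0$ to put it in the standard form
\[
\xi'(\tau) - \frac{p-1}{\tau}\xi(\tau) \leq \frac{C}{\tau}.
\]
The natural integrating factor is $\tau^{-(p-1)}$, since $\frac{d}{d\tau}\tau^{-(p-1)} = -(p-1)\tau^{-p}$. Multiplying through, the left-hand side becomes exactly $\frac{d}{d\tau}\bigl[\tau^{-(p-1)}\xi(\tau)\bigr]$, while the right-hand side reads $C\tau^{-p}$. Integrating this scalar inequality from $\tau_0$ to $\tau$ and computing the elementary integral $\int_{\tau_0}^{\tau} s^{-p}\,ds = \frac{1}{p-1}\bigl(\tau_0^{-(p-1)} - \tau^{-(p-1)}\bigr)$ produces a bound on $\tau^{-(p-1)}\xi(\tau)$. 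Multiplying by $\tau^{p-1}$ and regrouping the additive constants gives precisely the inequality in (i).

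For (ii), I would substitute the bound from (i) back into \eqref{eq_Gronwall_hp}:
\[
\tau\xi'(\tau)\leq(p-1)\left[\left(\xi(\tau_0)+\frac{C}{p-1}\right)\left(\frac{\tau}{\tau_0}\right)^{p-1}-\frac{C}{p-1}\right]+C.
\]
The two constant terms $\pm C$ cancel, leaving $\tau\xi'(\tau)\leq[(p-1)\xi(\tau_0)+C](\tau/\tau_0)^{p-1}$, and dividing by $\tau$ yields (ii) after rewriting $\tau^{-1}(\tau/\tau_0)^{p-1}=\tau_0^{-1}(\tau/\tau_0)^{p-2}$.

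I do not expect any genuine obstacle. The only items to verify are that $p-1>0$ (ensured by \eqref{p>n}, so the integrating factor is well defined and the divisions are legal), that $\tau_0>0$ keeps everything bounded away from the singularity at the origin, and that the assumed continuity and differentiability of $\xi$ together with the continuous bound $C\tau^{-p}$ suffice to apply the fundamental theorem of calculus to the product $\tau^{-(p-1)}\xi(\tau)$.
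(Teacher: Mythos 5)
Your argument is correct and is, in substance, exactly the paper's proof: dividing the hypothesis by $\tau^p$ (as the paper does) is identical to your ``divide by $\tau$, then multiply by the integrating factor $\tau^{-(p-1)}$,'' and both yield $\bigl(\xi(\tau)/\tau^{p-1}\bigr)' \leq C\tau^{-p}$, which one integrates to get (i). For (ii) the paper only remarks that it is a direct consequence of (i); your back-substitution into \eqref{eq_Gronwall_hp}, with the cancellation of the $\pm C$ terms, is the intended (and correct) way to make that explicit.
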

\begin{proof}
    Dividing both sides of the differential inequality \eqref{eq_Gronwall_hp} by $\tau^p$,  we get
	\begin{equation*}
	     \frac{\xi'(\tau)}{\tau^{p-1}}-(p-1) \frac{\xi(\tau)}{\tau^p}  = \left(\frac{\xi(\tau)}{\tau^{p-1}}\right)' \leq \frac{C}{\tau^p}.
	\end{equation*}
	Integrating the  last inequality on $(\tau_0 , \tau )$ we obtain
	\[
        \int_{\tau_0}^\tau \left(\frac{\xi(\tau)}{\tau^{p-1}}\right)' \, d\tau \leq  \int_{\tau_0}^\tau \frac{C}{\tau^p} \, d\tau,
    \]
  and  therefore
    \[
        \xi(\tau) \leq \left(\xi(\tau_0) + \frac{C}{p-1}\right) \left( \frac{\tau}{\tau_0}\right)^{p-1} - \frac{C}{p-1},
	\]
	which gives \emph{(i)}.
	
	The second statement of the Lemma, claim \emph{(ii)}, is a direct consequence of the first one. 
\end{proof}

\begin{lemma}
    Let $u$ and $v$ be the solutions to \eqref{p_originale} and \eqref{P_sharp} respectively. Then, for almost every $t >0$, it holds
	\begin{equation} 
	    \label{3.2}
	    \begin{split}
	        \hspace{-0.65em} &\gamma_{n,\ell,p} \, \mu(t)^{\left(\frac{\ell(p-1) +p(n-1)}{p(\ell+n)}\right)\frac{p}{p-1}} \leq  \\
	        &  \left(\int_0^{\mu (t)}f^\ast (s ) \, ds\right)^{\frac{1}{p-1}}
	        \left( - \mu'(t) + \frac{1}{\tilde{\beta} ^{\frac{1}{p-1}}}\int_{\partial U_t^\text{ext}} \frac{\abs{x}^{\frac{\ell}{p'}}}{u} \, d\mathcal{H}^{n-1}(x)\right),
	    \end{split}
	\end{equation}
	where 
	\[
	    \gamma_{n,\ell,p} = (n \omega_n)^{\frac{\ell+p}{p(n+\ell)}} (\ell+n)^{\frac{(p-1)\ell+(n-1)p}{p(n+\ell)}}
	\]
	and 
	\begin{equation} 
	    \label{3.1}
	    \begin{split}
	        \hspace{-0.65em} & \gamma_{n,\ell,p} \, \phi(t)^{\left(\frac{\ell(p-1) +p(n-1)}{p(\ell+n)}\right)\frac{p}{p-1}} = \\
	        & \left(\int_0^{\phi (t)}f^\ast (s ) \, ds\right)^{\frac{1}{p-1}} \left( - \phi'(t) + \frac{1}{\tilde{\beta} ^{\frac{1}{p-1}}}\int_{\partial V_t^\text{ext}} \frac{\abs{x}^{\frac{\ell}{p'}}}{v} \, d\mathcal{H}^{n-1}(x)\right).
	    \end{split}
	\end{equation}
\end{lemma}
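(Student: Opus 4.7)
The plan is a Talenti-type argument at each super-level $U_t=\{u>t\}$: rewrite the weak formulation \eqref{Weak_Form} at the level $t$, estimate the weighted perimeter $P_{\ell/p'}(U_t)$ by splitting it into the interior part (on $\{u=t\}$) and the exterior part (on $\partial\Omega\cap\overline{U_t}$), and close the inequality with the weighted isoperimetric inequality of Theorem~\ref{teo_disug_isoperimetrica}. First I would take the Lipschitz truncation $\varphi_h(x)=\min\{(u(x)-t)^+/h,\,1\}$ as test function in \eqref{Weak_Form}; its gradient is $h^{-1}\chi_{\{t<u<t+h\}}\nabla u$, and letting $h\to 0^+$ (coarea on the left, dominated convergence on the boundary and source terms) yields, for a.e.\ $t>0$,
\[
\int_{\partial U_t^{\text{int}}}\!|\nabla u|^{p-1}\,d\mathcal{H}^{n-1}+\int_{\partial U_t^{\text{ext}}}\!\beta(x)u^{p-1}\,d\mathcal{H}^{n-1}=\int_{U_t}\!f\abs{x}^\ell\,dx\le\int_0^{\mu(t)}\!f^\ast(s)\,ds,
\]
the last step being the Hardy--Littlewood inequality \eqref{E}.

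Next I would bound the two pieces of $P_{\ell/p'}(U_t)$ separately. On $\partial U_t^{\text{int}}$ the weighted coarea formula gives $-\mu'(t)=\int_{\partial U_t^{\text{int}}}\abs{x}^\ell/\abs{\nabla u}\,d\mathcal{H}^{n-1}$; applying Hölder with exponents $p',p$ to the factorization $\abs{x}^{\ell/p'}=(\abs{x}^\ell/\abs{\nabla u})^{1/p'}\abs{\nabla u}^{1/p'}$ yields
\[
\int_{\partial U_t^{\text{int}}}\!\abs{x}^{\ell/p'}\,d\mathcal{H}^{n-1}\le(-\mu'(t))^{1/p'}\left(\int_{\partial U_t^{\text{int}}}\!\abs{\nabla u}^{p-1}\,d\mathcal{H}^{n-1}\right)^{1/p}.
\]
On $\partial U_t^{\text{ext}}$, invoking \eqref{lower_bon} in the form $\tilde\beta\abs{x}^{\ell/p'}\le\beta(x)$, I would apply Hölder to the factorization $\abs{x}^{\ell/p'}=\bigl(\tilde\beta^{-1/(p-1)}\abs{x}^{\ell/p'}/u\bigr)^{1/p'}\bigl(\tilde\beta\abs{x}^{\ell/p'}u^{p-1}\bigr)^{1/p}$ to obtain
\[
\int_{\partial U_t^{\text{ext}}}\!\abs{x}^{\ell/p'}\,d\mathcal{H}^{n-1}\le\left(\frac{1}{\tilde\beta^{1/(p-1)}}\int_{\partial U_t^{\text{ext}}}\!\frac{\abs{x}^{\ell/p'}}{u}\,d\mathcal{H}^{n-1}\right)^{1/p'}\left(\int_{\partial U_t^{\text{ext}}}\!\beta(x)u^{p-1}\,d\mathcal{H}^{n-1}\right)^{1/p}.
\]

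To close, I would sum these two bounds via the elementary discrete inequality $a^{1/p'}b^{1/p}+c^{1/p'}d^{1/p}\le(a+c)^{1/p'}(b+d)^{1/p}$; thanks to the identity of the first step, the factor of exponent $1/p$ is then dominated by $\bigl(\int_0^{\mu(t)}f^\ast\bigr)^{1/p}$, so
\[
P_{\ell/p'}(U_t)\le\left(-\mu'(t)+\frac{1}{\tilde\beta^{1/(p-1)}}\int_{\partial U_t^{\text{ext}}}\!\frac{\abs{x}^{\ell/p'}}{u}\,d\mathcal{H}^{n-1}\right)^{1/p'}\left(\int_0^{\mu(t)}\!f^\ast\right)^{1/p}.
\]
Raising to the power $p'=p/(p-1)$ and invoking \eqref{eq_isoperimetrica} to bound $P_{\ell/p'}(U_t)$ from below produces \eqref{3.2}. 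Identity \eqref{3.1} follows from the very same chain of steps applied to the radial solution $v$ of \eqref{P_sharp}: since $v$ is radial and radially nonincreasing, every level set $V_t$ is a centered ball (so the isoperimetric inequality is an equality), the integrands $\abs{x}^\ell/\abs{\nabla v}$ and $\abs{x}^{\ell/p'}/v$ are constant on each $\partial V_t^{\text{int}}$ (so both Hölder estimates hold with equality), and $\int_{V_t}f^\sharp\abs{x}^\ell=\int_0^{\phi(t)}f^\ast$ by the equimeasurability of $f^\sharp$ and $f$. I expect the main technical delicacy to be the $h\to 0^+$ limit in the very first step, which requires a Fleming--Rishel type argument to legitimize the coarea formula at a.e.\ level $t$ in the presence of the Robin boundary contribution; tracking the isoperimetric constant $\gamma_{n,\ell,p}$ through the $p'$-th power is then a routine bookkeeping.
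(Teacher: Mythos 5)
Your proposal is correct and follows essentially the same route as the paper's proof: the truncation test function, the coarea identity at level $t$, the Hardy--Littlewood bound $\int_{U_t}f\abs{x}^\ell\,dx\le\int_0^{\mu(t)}f^\ast$, H\"older on $P_{\ell/p'}(U_t)$ against the weak-formulation density, and the isoperimetric inequality \eqref{eq_isoperimetrica}. The only organizational difference is that you apply H\"older separately on $\partial U_t^{\text{int}}$ and $\partial U_t^{\text{ext}}$ and then recombine via the two-term discrete H\"older inequality, whereas the paper applies a single continuous H\"older on the whole $\partial U_t$ with the piecewise density $g$ from \eqref{cosag} and then splits the resulting integral; these are trivially equivalent.
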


\begin{proof} Let $t >0$ e $h >0$, we use the following test function in \eqref{Weak_Form}
	\begin{equation*}
    	\varphi (x)= 
    	\begin{cases}
        	0 & \text{ if }u < t \\
        	u-t & \text{ if }t< u < t+h \\
        	h & \text{ if }u > t+h.
    	\end{cases}
	\end{equation*}
	We get
	\begin{equation*}
    	\begin{split}
        	\int_{U_t \setminus U_{t+h}} \abs{\nabla u}^p\, dx & +  h \int_{\partial U_{t+h}^{ ext }} \beta(x) u^{p-1}  \, d\mathcal{H}^{n-1}(x)  \\
        	& + \int_{\partial U_{t}^{ ext }\setminus \partial U_{t+h}^{ ext }} \beta(x)  u^{p-1} (u-t) \, d\mathcal{H}^{n-1}(x) \\
        	& =  \int_{U_t \setminus U_{t+h}}  f \abs{x}^\ell(u-t ) \, dx + h \int_{U_{t+h}} f \abs{x}^\ell\, dx.
    	\end{split}
	\end{equation*}
	Dividing by $h$, using coarea formula and letting $h$ go to 0, we have for a.e. $t>0$
	\begin{equation*}
	    \int_{\partial U_t} g(x) \, d\mathcal{H}^{n-1}(x) = \int_{U_{t}} f \abs{x}^\ell\, dx ,
	\end{equation*}
	with
	\begin{equation}
	    \label{cosag}
    	g(x)= 
    	\begin{cases}
    	    \abs{\nabla u }^{p-1} & \text{ if }x \in \partial U_t^{ int },\\[1em]
    	    \beta(x) u ^{p-1}& \text{ if }x \in \partial U_t^{ ext }.
        \end{cases}
	\end{equation}
	Using  isoperimetric inequality \eqref{eq_isoperimetrica} and  H\"older inequality, for a.e. $t>0$, we obtain
	\begin{equation}
	\label{split}
        \begin{split}
            & \gamma_{n,\ell,p} \mu(t)^{\left(\frac{\ell(p-1) +p(n-1)}{p(\ell+n)}\right)} \\
            & \leq P_u(t) = \int_{\partial U_t} \abs{x}^{\frac{\ell}{p'}}\,  d\mathcal{H}^{n-1}(x) \\
            & \leq \left(\int_{\partial U_t}g\, d\mathcal{H}^{n-1}(x)\right)^{\frac{1}{p}} \left(\int_{\partial U_t}\frac{\abs{x}^\ell}{g^{\frac{1}{p-1}}} \, d\mathcal{H}^{n-1}(x)\right)^{1-\frac{1}{p}}.
        \end{split}
	\end{equation}
	Since, for a.e. $t >0 $ it holds that
	\[
	-\mu'(t) = \int_{\partial U_t^{int}} \frac{\abs{x}^\ell}{\abs{\nabla u}} \, d \mathcal{H}^{n-1},
	\]
     by \eqref{lower_bon}, \eqref{E}, \eqref{cosag} and \eqref{split}, we get
	\begin{align*}
	    & \gamma_{n,\ell,p} \mu(t)^{\left(\frac{\ell(p-1) +p(n-1)}{p(\ell+n)}\right)} \\
	    &\leq \left(\int_{\partial U_t} \! \! \! g\,   d\mathcal{H}^{n-1}(x)\right)^{\frac{1}{p}} \! \left( \int_{\partial U_t^{ int }}\frac{\abs{x}^\ell}{\abs{\nabla u}}\, d\mathcal{H}^{n-1}(x) +\frac{1}{\tilde{\beta}^{\frac{1}{p-1}}} \int_{\partial U_t^{ ext }} \! \!  \frac{\abs{x}^{\frac{\ell}{p'}}}{u} \,  d\mathcal{H}^{n-1}(x) \right)^{\frac{1}{p'}} \\
	    &\leq \left(\int_0^{\mu(t)} f^\ast (s) \, ds\right)^{\frac{1}{p}} \left( -\mu'(t) +\frac{1}{\tilde{\beta}^{\frac{1}{p-1}}} \int_{\partial U_t^{ ext }}\frac{\abs{x}^{\frac{\ell}{p'}}}{u} \,  d\mathcal{H}^{n-1}(x) \right)^{\frac{1}{p'}}
	\end{align*}
	for a.e. $t \in [0, \sup_{\Omega} u )$.
	
	Then \eqref{3.2} follows. Replacing $u$ with $v$, the solution to \eqref{P_sharp}, all the inequalities are verified as equalities, so we obtain \eqref{3.1}.
\end{proof}
\begin{lemma}
	\label{lemma3.3}
	Let $u$ and $v$ be the solutions to \eqref{p_originale} and \eqref{P_sharp} respectively. Then for all $\tau \geq v_m $ we have
	\begin{equation}
    	\label{3.11}
    	\int_0^\tau t^{p-1} \left(\int_{\partial U_t^{ ext } } \frac{\abs{x}^{\frac{\ell}{p'}}}{ u(x) } \, d \mathcal{H}^{n-1}(x)\right) \, dt \leq  \frac{1}{p\tilde{\beta}} \int_0^{\abs{\Omega}_{\ell}} f^\ast(s) \,ds
	\end{equation}
	and
	\begin{equation}
	    \label{3.10}
    	\int_0^\tau t^{p-1} \left(\int_{\partial V_t \cap \partial\Omega^\sharp } \frac{\abs{x}^{\frac{\ell}{p'}}}{ v(x) } \, d \mathcal{H}^{n-1}(x)\right) \, dt =  \frac{1}{p\tilde{\beta}} 
    	\int_0^{\lvert \Omega^{\sharp} \rvert_{\ell}} f^\ast(s) \,ds.
	\end{equation}
\end{lemma}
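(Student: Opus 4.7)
The argument for both parts is driven by a single Fubini-type identity. For a.e.\ $t>0$, the set $\partial U_t^{\text{ext}}$ coincides (up to $\mathcal{H}^{n-1}$-null sets) with $\{x\in\partial\Omega:u(x)>t\}$, so exchanging the order of integration gives
\[
\int_0^\tau t^{p-1}\!\left(\int_{\partial U_t^{\text{ext}}}\!\frac{\abs{x}^{\ell/p'}}{u}\,d\mathcal{H}^{n-1}\right)\!dt=\frac{1}{p}\int_{\partial\Omega}\abs{x}^{\ell/p'}\,\frac{[\min(u,\tau)]^p}{u}\,d\mathcal{H}^{n-1},
\]
and the corresponding identity on $\partial\Omega^\sharp$ for $v$. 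Once this reformulation is in hand, both statements reduce to elementary manipulations that strip away the $\tau$-dependence through the weak formulation.

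\textbf{Step 1 (inequality \eqref{3.11}).} I would bound the integrand by two pointwise estimates: the trivial $[\min(u,\tau)]^p/u\le u^{p-1}$, and the consequence $\abs{x}^{\ell/p'}\le\beta(x)/\tilde\beta$ of \eqref{lower_bon}. Substituting both into the Fubini identity, the right-hand side is dominated by $(p\tilde\beta)^{-1}\int_{\partial\Omega}\beta(x)u^{p-1}\,d\mathcal{H}^{n-1}$. Testing \eqref{Weak_Form} against $\varphi\equiv 1$ converts this boundary integral into $\int_\Omega f\abs{x}^\ell\,dx$, which by equi-distribution \eqref{equi} equals $\int_0^{\abs{\Omega}_\ell} f^*(s)\,ds$, and chaining these bounds gives \eqref{3.11}.

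\textbf{Step 2 (equality \eqref{3.10}).} Now I would exploit the radial structure of $v$: since $v\equiv v_m$ on $\partial\Omega^\sharp$ and, for $t>v_m$, the superlevel set $V_t$ is a centered ball compactly contained in $\Omega^\sharp$ (so $\partial V_t\cap\partial\Omega^\sharp=\emptyset$), the $v$-version of the Fubini identity collapses, for every $\tau\ge v_m$, to $\tfrac{v_m^{p-1}}{p}\,P_{\ell/p'}(\Omega^\sharp)$. I would then close the proof by invoking the identity $\tilde\beta\,v_m^{p-1}\,P_{\ell/p'}(\Omega^\sharp)=\int_\Omega f\abs{x}^\ell\,dx$ already established in the discussion preceding the lemma, by testing the weak form of \eqref{P_sharp} against $\varphi\equiv 1$ and using $\abs{\Omega^\sharp}_\ell=\abs{\Omega}_\ell$ together with the equi-distribution of $f$ and $f^\sharp$.

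\textbf{Main difficulty.} There is no deep obstacle. The only step requiring a touch of insight is recognising that the Fubini swap produces the combination $[\min(u,\tau)]^p/u$: once this is bounded by $u^{p-1}$, the global weak-form identity $\int_{\partial\Omega}\beta u^{p-1}\,d\mathcal{H}^{n-1}=\int_\Omega f\abs{x}^\ell\,dx$ eliminates the $\tau$-dependence in a single stroke, which is precisely the punchline the lemma is after.
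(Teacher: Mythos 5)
Your proof is correct and follows essentially the same route as the paper: a Fubini swap, the test function $\varphi\equiv 1$ in the weak formulation, the lower bound $\beta(x)\geq\tilde\beta\abs{x}^{\ell/p'}$, and the fact that $v\equiv v_m$ on $\partial\Omega^\sharp$ with $\partial V_t\cap\partial\Omega^\sharp=\emptyset$ for $t>v_m$. The only cosmetic difference is that you keep the truncation at $\tau$ and bound $[\min(u,\tau)]^p/u\leq u^{p-1}$, whereas the paper first computes the full integral over $(0,\infty)$ and then uses monotonicity to restrict to $(0,\tau)$.
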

\begin{proof}
    Firstly we show \eqref{3.11}. Fubini's Theorem ensures that

	\begin{equation}
	\label{parteA}
    	\begin{split}
        	& \int_0^\infty \! \! \tau^{p-1} \left(\int_{\partial U_\tau^{ ext } } \frac{\abs{x}^{\frac{\ell}{p'}}}{ u(x) } d \mathcal{H}^{n-1}(x)\right)  d\tau \\
        	& =\int_{\partial \Omega} \left(\int_0^{u(x)} \frac{\tau^{p-1}}{u(x)} \, d\tau \right) \abs{x}^{\frac{\ell}{p'}}\, d\mathcal{H}^{n-1}(x)\\
        	&=\frac{1}{p}\int_{\partial \Omega} u(x)^{p-1}  \abs{x}^{\frac{\ell}{p'}}\, d\mathcal{H}^{n-1}(x).
    	\end{split}
	\end{equation}
	Choosing $\varphi \equiv 1$ as test function in \eqref{Weak_Form} and using \eqref{lower_bon}, we obtain
\begin{equation}
\label{parteB}
	\int_{\partial \Omega} \tilde{\beta} \abs{x}^{\frac{\ell}{p'}} u^{p-1} \, d\mathcal{H}^{n-1} \leq \int_{\partial \Omega} \beta(x) u^{p-1} \, d\mathcal{H}^{n-1} = \int_{\Omega} f \abs{x}^\ell \, dx.  
\end{equation}
From \eqref{parteA} and \eqref{parteB} we immediately get	
	\[
	\int_0^\infty \tau^{p-1} \left(\int_{\partial U_\tau^{ ext } } \frac{\abs{x}^{\frac{\ell}{p'}}}{ u(x) } \, d \mathcal{H}^{n-1}(x)\right) \, d\tau \leq \frac{1}{p \tilde{\beta}} \int_0^{\lvert \Omega \rvert_{\ell}}f^\ast (s) \, ds,
	\]
	and, arguing in the same way, we deduce
	\[
    \int_0^\infty \tau^{p-1} \left(\int_{\partial V_\tau \cap \partial\Omega^{\sharp} } \frac{\abs{x}^{\frac{\ell}{p'}}}{ v(x) } \, d \mathcal{H}^{n-1}(x)\right) \, d\tau = \frac{1}{p \tilde{\beta}} \int_0^{\lvert \Omega \rvert_{\ell}}f^\ast (s) \, ds.
	\]
    Hence for any $ t\ge0$, we have
	\[
	\int_0^t \tau^{p-1} \left(\int_{\partial U_\tau^{ ext } } \frac{\abs{x}^{\frac{\ell}{p'}}}{ u(x) } \, d \mathcal{H}^{n-1}(x)\right) \, d\tau \leq\frac{1}{p \tilde{\beta}} \int_0^{\lvert \Omega \rvert_{\ell}}f^\ast (s) \, ds.
	\]
    Since $\partial V_t \cap \partial \Omega^\sharp$ is empty for any $t\geq v_m$, we get
	\[
	\int_0^t \tau^{p-1} \left(\int_{\partial V_\tau \cap \partial\Omega^{\sharp} } \frac{\abs{x}^{\frac{\ell}{p'}}}{ v(x) } \, d \mathcal{H}^{n-1}(x)\right) \, d\tau = \frac{1}{p \tilde{\beta}} \int_0^{\lvert \Omega \rvert_{\ell}}f^\ast (s) \, ds.
	\]  
	 Lemma \ref{lemma3.3} is hence proved.
\end{proof}

\section{Main results}
\label{section_3}

In this Section we prove Theorem \ref{teorema1} and Theorem \ref{teorema2}.
\begin{proof}[Proof of Theorem \ref{teorema1}]
	Let 
	\[
	\displaystyle{\delta_1 = 1 -{{\left(\frac{\ell(p-1) +p(n-1)}{(p-1)(\ell+n)}\right).}}}
	\]
	Note that, since $p\geq n$, $\delta_1$ is	a positive constant.
	
	Multiplying both sides of \eqref{3.2} by $t^{p-1} \mu (t)^{\delta_1}$ and integrating over $(0,\tau)$ with $\tau \geq v_m$, by Lemma \ref{lemma3.3}, we have
	\begin{equation}
		\label{3.12}
		\begin{split}
		\gamma_{n,\ell,p}	\int_0^\tau  t^{p-1}\mu(t) \, dt &
			\leq  	
			\int_0^\tau \left(- \mu'(t) \right) t^{p-1}\mu(t)^{\delta_1} \left(\int_0^{\mu (t)}f^\ast (s ) \, ds\right)^{\frac{1}{p-1}} \, dt \\ 
			&+ \frac{\abs{\Omega}_{\ell}^{\delta_1}}{p\tilde{\beta} ^{\frac{p}{p-1}}} \left(\int_0^{\abs{\Omega}_{\ell}}f^\ast (s ) \, ds\right)^{\frac{p}{p-1}}.	
		\end{split} 
	\end{equation}
	Setting
	\[
	\displaystyle{F(\ell)= \int_0^\ell \omega^{\delta_1} 
	\left(\int_0^\omega f^\ast (s) \, ds \right)^{\frac{1}{p-1}} \, d\omega},
	\]
	and, integrating by parts both sides of the last inequality, we obtain
    \begin{equation*}
    	\begin{split}
    		& \tau^{p-1} \left(
    		\gamma_{n,\ell,p}	\int_0^\tau  \mu(t) \, dt+ F(\mu(\tau)) \right) \\
    		& \leq  (p-1)  \int_0^\tau t^{p-2} 
    		\left(
    	    \gamma_{n,\ell,p}	\int_0^t \mu(s)\, ds+ F(\mu(t))\right)\, dt \\ 
    		& +\frac{\abs{\Omega}_{\ell}^{\delta_1}}{p\tilde{\beta} ^{\frac{p}{p-1}}} \left(\int_0^{\abs{\Omega}_{\ell}}f^\ast (s ) \, ds\right)^{\frac{p}{p-1}}.
    	\end{split} 
    \end{equation*}
    We observe that all the hypotheses of Gronwall's Lemma \ref{lemma_Gronwall} are satisfied with 
    \[
    \xi(\tau)=\xi_1(\tau)= \int_0^\tau t^{p-2}
    \left( \gamma_{n,\ell,p} \int_0^t  \mu(s)\, ds+ F(\mu(t))\right)\, dt
    \]
    and
    \[
    C= C_1 =
    \frac{
    \abs{\Omega}_{\ell}^{\delta_1}}{p\tilde{\beta} ^{\frac{p}{p-1}}} \left(\int_0^{\abs{\Omega}_{\ell}}f^\ast (s ) \, ds\right)^{\frac{p}{p-1}}.
    \]
    Applying such a Lemma  with $\tau_0= v_m$, we infer that 
    for any $\tau \geq v_m$ it holds
    \begin{equation*}
    		\tau^{p-2} 
    		\left(
    \gamma_{n,\ell,p}		\int_0^\tau  \mu(s)\, ds+ F(\mu(\tau))\right)
    \leq 
    \left( \frac{(p-1)\xi_1(v_m )+ C_1}{v_m}\right) \left( \frac{\tau}{v_m}\right)^{p-2} .
    \end{equation*}
  
    Replacing $\mu(t)$ with $\phi(t)$ the previous inequality holds as an equality. Therefore for any  $\tau \geq v_m$ it holds that 
    \begin{equation}
    \label{>t_m}
    	\gamma_{n,\ell,p}	 \int_0^\tau  \mu(s)\, ds+ F(\mu(\tau))
    		 \leq 
    	\gamma_{n,\ell,p}	 \int_0^\tau  \phi(s)\, ds+ F(\phi(\tau)).
    \end{equation}
    On the other hand,  since $\mu(t) \leq \phi(t)=\abs{\Omega}_{\ell}, \, \forall t \leq v_m$, and $F(\ell)$ is an increasing function, we have
    that for any  $\tau \leq v_m$ it holds that
     \begin{equation}
    \label{<t_m}
         \gamma_{n,\ell,p} \int_0^\tau  \mu(s)\, ds+ F(\mu(t))
         \leq
        \gamma_{n,\ell,p}   \int_0^\tau   \phi(s)\, ds+ F(\phi(t))
    \end{equation}
    Combining inequalities \eqref{>t_m} and \eqref{<t_m} we conclude that 
    for any $\tau \geq 0$ it holds
    \[
   \gamma_{n,\ell,p} \int_0^\tau  \mu(s)\, ds+ F(\mu(\tau))
   \leq 
  \gamma_{n,\ell,p} \int_0^\tau  \phi(s)\, ds+ F(\phi(\tau)).
    \]
Since
\begin{equation*}
\lim_{\tau \rightarrow +\infty }F\left( \mu (\tau )\right) =\lim_{\tau
\rightarrow +\infty }F\left( \phi (\tau )\right) =0,
\end{equation*}
letting $\tau$ goes to $+\infty$, we conclude that
    \[
    \int_0^\infty \mu(t) \, dt \leq \int_0^\infty \phi(t) \, dt ,
    \]
    and hence
    \[
    \norma{u}_{L^{1}(\Omega,\abs{x}^\ell \, dx)} \leq \norma{v}_{L^{1}(\Omega^\sharp,\abs{x}^\ell \, dx)} .
    \]
  
    \vspace{1 cm}
    
\noindent  Now   let us show \eqref{L^2}. Firstly let us observe that it is enough to verify that
    \begin{equation}
    	\label{basta}
    	\int_0^\infty t^{p-1} \mu(t) \, dt \leq \int_0^\infty  t^{p-1} \phi(t) \, dt.
    \end{equation}
    Integrating by parts the first term on the right-hand side in \eqref{3.12}, and then letting $\tau$ go to $+ \infty$, we obtain
    \[
    \gamma_{n,\ell,p}  \int_0^\infty  t^{p-1}\mu(t) \, dt
    \leq
    (p-1)\int_0^\infty t^{p-2} F(\mu(t)) \, dt
    +
    \frac
    { \abs{\Omega}_{\ell}^{\delta_1 }}
    {p\tilde{\beta} ^{\frac{p}{p-1}}} 
    \left(\int_0^{\abs{\Omega}_{\ell}}f^\ast (s ) \, ds\right)^{\frac{p}{p-1}},
    \]
    and similarly we have
    \[
    \gamma_{n,\ell,p}   \int_0^\infty  t^{p-1}\phi(t) \, dt =\  	(p-1)\int_0^\infty t^{p-2} F(\phi(t)) \, dt 
    +        
    \frac  { \abs{\Omega}_{\ell}^{\delta_1 }}{p\tilde{\beta} ^{\frac{p}{p-1}}} \left(\int_0^{\abs{\Omega}_{\ell}}f^\ast (s ) \, ds\right)^{\frac{p}{p-1}}.
    \]
    Thus if we prove
    \begin{equation}
        \label{scorciatoia}
        \int_0^\infty t^{p-2} F(\mu(t)) \, dt \leq \int_0^\infty t^{p-2} F(\phi(t)) \, dt
    \end{equation}
    we get the claim \eqref{basta}.
   
    Let 
    \[
    \theta_1 
    = 
    -\frac{\ell(p-1) +p(n-1)}{(p-1)(\ell+n)}.
    \]
    Multiplying both sides of \eqref{3.2} by 
    $\displaystyle{t^{p-1}F(\mu(t)) \mu(t)^{\theta_1}}$ 
    and then integrating over $(0, \tau)$, taking into account that  the function
    $\displaystyle   {  h(\ell)= F(\ell)\ell^{\theta_{1}} }$  is non decreasing, we get
    \begin{equation}
    \label{b_G}
    	\begin{split}
        	& \gamma_{n,\ell,p}	\int_0^\tau 
        	t^{p-1}F(\mu(t)) \, dt \\
        	& 
        	\leq
        	\int_0^\tau \bigl(- \mu'(t) \bigr) t^{p-1}\mu(t)^{\theta_1 } F(\mu(t)) \left(\int_0^{\mu (t)}f^\ast (s ) \, ds\right)^{\frac{1}{p-1}} \! dt \\
        		&
        		+F(\abs{\Omega}_{\ell})\frac{\abs{\Omega}_{\ell}^{\theta_1}}{p\tilde{\beta} ^{\frac{p}{p-1}}} \left(\int_0^{\abs{\Omega}_{\ell}}f^\ast (s ) \, ds
        	\right)^{\frac{p}{p-1}}.
    	\end{split} 
    \end{equation}
Let     
    \[
        C_2
        =
        F(\abs{\Omega}_{\ell})
        \frac{\abs{\Omega}_{\ell}^{\theta_1 }}{p\tilde{\beta} ^{\frac{p}{p-1}}} \left(\int_0^{\abs{\Omega}_{\ell}}f^\ast (s ) \, ds\right)^{\frac{p}{p-1}}.
    \]
   Integrating by parts both sides of \eqref{b_G} we derive
    \begin{equation}
        \label{Solidus_Snake}
        \begin{split}
            \gamma_{n,\ell,p} \,  \tau  \int_0^{\tau}  t^{p-2} F (\mu(t)) \, dt + \tau H_\mu(\tau) & \leq \gamma_{n,\ell,p} \int_{0}^{\tau} \int_0^t r^{p-2} F(\mu(r)) \, dr dt \\
            & + \int_0^{\tau} H_\mu(t) \, dt +  C_2 ,
        \end{split}
    \end{equation}
    where
    \[
    H_\mu(\tau)
    =
    -\int_{\tau}^{+\infty} t^{p-2} \mu(t)^{\theta_1} F(\mu(t)) \biggl( \int_0^{\mu(t)} f^*(s) \, ds \biggr)^{\frac{1}{p-1}} \, d\mu(t).
    \]
    Setting
    \begin{equation*}
    	\begin{multlined}
    \xi_2(\tau)=	
    	\gamma_{n,\ell,p}	\int_0^\tau \int_0^t  r^{p-2}F(\mu(r)) \, dr 
    	+\int_0^t H_{\mu}(t) \, dt,
    	\end{multlined} 
    \end{equation*}
    then \eqref{Solidus_Snake} reads as
    \[
    \tau  \xi_2'(\tau ) 
    \leq  \xi_2(\tau) +C_2.
    \]
    Lemma \ref{lemma_Gronwall}, with $\tau_0=v_m$, ensures that 
    the following inequality holds true for any $\tau \geq v_m $
    \begin{align*}
        & \gamma_{n,\ell,p}  \int_{0}^{\tau}  t^{p-2} F(\mu(t)) \, dt + H_\mu(\tau) \\
        & \leq \frac{\displaystyle{(p-1)\int_{0}^{v_m} t^{p-2} F(\mu(t)) \, dt +H_\mu(v_m) + C_2}}{v_m}
        \left( \frac{\tau}{v_m}\right)^{p-2}.
    \end{align*}
    The previous inequality becomes an equality if we replace $\mu(t)$ with $\phi(t)$, so, recalling that $\mu(t) \leq \phi(t)= \abs{\Omega}_{\ell}$  for $t \leq v_m$, we obtain
    \[
      \gamma_{n,\ell,p}  \int_{0}^{\tau}  t^{p-2} F(\mu(t)) \, dt + H_\mu(\tau) \leq 
         \gamma_{n,\ell,p} \int_{0}^{\tau} t^{p-2} F(\phi(t)) \, dt +H_\phi(\tau).
    \]
    Letting $\tau \to \infty$, we have
    \[
    \int_0^\infty t^{p-2} F(\mu(t)) \, dt \leq \int_0^\infty t^{p-2} F(\phi(t)) \, dt,
    \]
    since, as we will show,
    \begin{equation}
        \label{hto0}
        \lim_{\tau \to 0}H_\mu(\tau)=\lim_{\tau \to 0} H_\phi(\tau) = 0.
    \end{equation}
    This proves \eqref{scorciatoia}, and hence \eqref{L^2}.
    
    To prove \eqref{hto0}, recalling that $p \geq n$, we observe that
    	\begin{equation*}
    	    t^{p-2} \mu(t)=\int_{\Set{u>t}} t^{p-2} \abs{x}^\ell \, dx \leq \int_{\Set{u>t}} u^{p-2} \abs{x}^\ell \, dx  \leq \norma{u}_{L^p(\Omega,\abs{x}^\ell \, dx)}^{p-2} \, \mu(t)^{\frac{2}{p}},
    	\end{equation*}
    	therefore
    	\begin{align*}
    	    \abs{H_\mu(\tau)}
    	    &=
    	    \int_{\tau}^{+\infty} t^{p-2} F(\mu(t))
    	    \mu(t)^{\theta_1} \biggl( \int_0^{\mu(t)} f^*(s) \, ds \biggr) (-\mu'(t)) \, dt \\
    	    &\leq \biggl(\int_0^{\abs{\Omega}_{\ell}} \! f^*(s) \, ds\biggr) \norma{u}_{L^p(\Omega, \abs{x}^\ell dx)}^{p-2} \int_{\tau}^{+\infty} \! \! \! F(\mu(t))  \mu(t)^{\frac{2}{p}+\theta_1 -1} (-\mu'(t)) \, dt. 
    	\end{align*}
    	The claim follows by observing that the right-hand side of the above inequality goes to $0$ as $\tau \to +\infty$.
    	
\end{proof}


\begin{oss}
    We emphasize that, with the same argument in \cite{ANT, AGM}, it is possible to obtain a stronger comparison in Theorem \ref{teorema1}. It can be shown 
    \begin{equation*}
        \norma{u}_{L^{k,1}(\Omega,\abs{x}^\ell \, dx)} \, \leq \norma{v}_{L^{k,1}(\Omega^\sharp,\abs{x}^\ell \, dx)} \qquad \forall \, 0 < k \leq \frac{(\ell+n)(p-1)}{\ell(p-1)+p(n-1)}
    \end{equation*}
and    
    \begin{equation*}
        \norma{u}_{L^{pk,p}(\Omega,\abs{x}^\ell \, dx)} \, \leq \norma{v}_{L^{pk,p}(\Omega^\sharp,\abs{x}^\ell \, dx)} \qquad \forall \, 0 < k \leq \frac{(\ell+n)(p-1)}{\ell(p-1)+p(n-2) +n},
    \end{equation*}
where $\norma{\cdot}_{L^{p,q}(\Omega,\abs{x}^\ell \, dx)}$ is defined in \eqref{deff_norma_lorentz}.
\end{oss}

\vspace{0.5 cm}

\begin{proof}[Proof of Theorem \ref{teorema2}]{\color{white} .}
    Let
    \[
    \theta_2 = \frac{\ell(p-1) +p(n-1)}{p(\ell+n)}
    \]
    and 
    \[
    \delta_2 = -\left(\theta_2-\frac{1}{p}\right)\frac{p}{p-1}.
    \]
    We point out that $\delta_2 \geq 0$ by assumption \eqref{l<n+rob}.
    
    Obviously in this case we have  
    \[
    \displaystyle{\int_0^{\mu(t)} f^\ast(s) \, ds= \mu(t)}.
    \]
    Hence \eqref{3.2} can be written as
    \begin{equation}
    	\label{f=1}
    	\gamma_{n,\ell,p} \,
    	\mu(t)^{\left(\theta_2-\frac{1}{p}\right)\frac{p}{p-1}} \leq - \mu'(t) + \frac{1}{\tilde{\beta} ^{\frac{1}{p-1}}}\int_{\partial U_t^\text{ext}} \frac{\abs{x}^{ \frac{\ell}{p'} }  }{u} \, d\mathcal{H}^{n-1}(x).
    \end{equation}
    Multiplying both sides of \eqref{f=1} by $t^{p-1} \mu(t)^{\delta_2}$ and, then, integrating  from $0$ to $\tau \geq v_m$, we obtain
    \begin{equation}
    	\label{20}
    	\begin{split}
        	\gamma_{n,\ell,p} \int_0^\tau  t^{p-1} \, dt &
    	    \leq
    	    \int_0^\tau t^{p-1} 
    	    \mu(t)^{\delta_2}
    	    (- \mu'(t)) \, dt \\
    	    & + 
    	    \frac{1}{\tilde{\beta} ^{ \frac{1}{p-1}}}\int_0^\tau t^{p-1} 
    	    \mu(t)^{\delta_2}
    	    \int_{\partial U_t^\text{ext}} \frac{1}{u} \, d\mathcal{H}^{n-1}(x) \\
    		&
    		\leq
    		\int_0^\tau t^{p-1}
    		\mu(t)^{\delta_2} (- \mu'(t)) \, dt 
    		+  \frac{\abs{\Omega}_{\ell}^{1+ \delta_2}}{p \tilde{\beta} ^{\frac{p}{p-1}}}.
    	\end{split}
    \end{equation}
		
    Taking into account of Lemma \ref{lemma3.3}, if we replace $\mu(t)$ with $\phi(t)$, the previous inequality holds as equality.
    Hence, we get
    \begin{equation*}
        \int_0^\tau t^{p-1} 
        \mu(t)^{\delta_2} (- \mu'(t)) \, dt  
        \geq
        \int_0^\tau t^{p-1} \phi(t)^{\delta_2} (- \phi'(t)) \, dt .
    \end{equation*}
    In turn an integration by parts yields 
    \begin{equation*}
        \begin{split}
            & -\tau^{p-1} \, \frac{\mu(\tau)^{1+ \delta_2}}{1+ \delta_2} + (p-1) \int_0^\tau t^{p-2}\, \frac{\mu(t)^{1+ \delta_2}}{1+ \delta_2} \, dt \geq \\
            & -\tau^{p-1} \, \frac{\phi(\tau)^{1+ \delta_2}}{1+ \delta_2} + (p-1) \int_0^\tau t^{p-2} \, \frac{\phi(t)^{1+ \delta_2}}{1+ \delta_2}\, dt.
        \end{split}
    \end{equation*}
	The above inequality  allows us to use claim (ii) of the Gronwall's Lemma, this time to the function
	\[
	    \xi_3(\tau)
	    =
	    \int_0^\tau s^{p-2}
	    \left(
	    \frac{\mu(s)^{1+ \delta_2}-\phi(s)^{1+ \delta_2}}{1+ \delta_2}
	    \right)\, ds.
	\]
	Hence for any $\tau \geq v_m$ it holds
	\[
    	\tau^{p-2}
    	\left(
    	\frac{\mu^{1+ \delta_2}(\tau)- \phi^{1+ \delta_2}(\tau)}{1+ \delta_2}\right)
    	\leq
    	(p-1) \frac{\tau^{p-2}}{v_m^{p-2}}\int_0^{v_m} \! \! \! s^{p-2} 
    	\left(
    	\frac{\mu^{1+ \delta_2}(s)- \phi^{1+ \delta_2}(s)}{1+ \delta_2}
    	\right)  ds.
	\]
	Inequality \eqref{minima_eq} ensures us that the right-hand side of the inequality above is non-positive, therefore
	\begin{equation*}
	    \mu (\tau) \leq \phi (\tau) \quad \forall \tau \geq v_m.
	\end{equation*} 
	Finally, since by \eqref{minima_eq} we have
	\[
	\mu (\tau) \leq \phi (\tau) = \abs{\Omega}_{\ell} \quad \forall \tau \leq v_m,
	\]
	and, therefore, the inequality above holds true 
	in $\left[ 0,+\infty \right) $. Claim \eqref{7} is hence proven.

\end{proof}

%

\section{Some applications}
\label{section_4}
In this section we give some applications of our previous results. First of all, as already mentioned in the introduction, it is possible to derive a  Faber-Krahn inequality for the Robin $p$-Laplacian operator with non-constant boundary parameter.







	
\begin{proof}[Proof of Theorem \ref{Faber_Krahn}]
	Let $u$ be a positive minimizer  of \eqref{trace}, then 
	\begin{equation*}
    	\begin{cases}
        	-\Delta_p u= \lambda_{1,\beta}(\Omega) \, u^{p-1}  \abs{x}^{\ell} & \text{ in } \Omega \\[0.3em]
        	\abs{\nabla u}^{p-2} \displaystyle{\frac{\partial u}{\partial \nu}} + \beta(x) 
        	u^{p-1}  =0  & \text{ on } \partial \Omega.
    	\end{cases}
	\end{equation*}
	Now, let $z$ be a solution to the following problem 
	\begin{equation}
    	\label{bho}
    	\begin{cases}
        	-\Delta_p z= \lambda_{1,\beta}(\Omega) \,  ( u^\sharp )^{p-1} \,  \abs{x}^{\ell}  & \text{ in } \Omega^\sharp \\[0.3em]
        	\abs{\nabla z}^{p-2} \displaystyle{\frac{\partial z}{\partial \nu}} + \tilde{\beta}
        	( r^\sharp )^{\frac{\ell}{p'}}  z^{p-1} =0  & \text{ on } \partial \Omega^\sharp.
    	\end{cases}
	\end{equation} 
	Therefore, Theorem \ref{teorema1} ensures that
    \[
	    \int_{\Omega} u^p \abs{x}^{\ell}\, dx = \int_{\Omega^\sharp} (u^\sharp)^p \abs{x}^{\ell}\, dx \leq \int_{\Omega^\sharp} z^p \abs{x}^{\ell}\, dx,
	\]
	while H\"older inequality gives
	\begin{align*}
	    \int_{\Omega^\sharp}(u^\sharp)^{p-1}  z\abs{x}^{\ell}  \, dx & \leq \left(\int_{\Omega^\sharp} ( u^\sharp )^p\abs{x}^{\ell} \, dx \right)^{\frac{p-1}{p}}  \left(\int_{\Omega^\sharp} z^p \abs{x}^{\ell}  \, dx\right)^{\frac{1}{p}} \\
	    & \leq \int_{\Omega^\sharp} z^p \abs{x}^{\ell} \, dx. 
	\end{align*}
	Hence, writing $\lambda_{1,\beta(x)}(\Omega)$ according to \eqref{bho}, we get
	\begin{equation*}
    	\begin{split}
        	\lambda_{1,\beta(x)}(\Omega)&= \frac{\displaystyle{\int_{\Omega^\sharp} \abs{\nabla z}^p \, dx +  \int_{\partial \Omega^\sharp}\tilde{\beta}  (r^\sharp)^{\frac{\ell}{p'}} z^p\  \, d\mathcal{H}^{n-1}(x)}}{\displaystyle{\int_{\Omega^\sharp} (u^\sharp)^{p-1}   z \abs{x}^{\ell} \, dx}} \\
        	&\geq 
        	\frac{\displaystyle{\int_{\Omega^\sharp} \abs{\nabla z}^p \, dx +  \int_{\partial \Omega^\sharp} \tilde{\beta} (r^\sharp)^{\frac{\ell}{p'}}z^p \, d\mathcal{H}^{n-1}(x)}}{\displaystyle{\int_{\Omega^\sharp} z^p\abs{x}^{\ell}  \, dx}} \geq\lambda_{1,\tilde{\beta}}(\Omega^\sharp).
    	\end{split}
	\end{equation*}
\end{proof}

\section{Appendix}
\label{appendix:A}
Here we provide the proofs of Propositions \ref{proposizione_2} and \ref{proposizione_3} just for
$p=n$, since the remaining cases can be treated analogously.

\begin{proof}[Proof of Proposition \ref{proposizione_2}]
     We recall that, as well-known,  $W^{1,n}(\Omega)$ is compactly embedded in $L^{q}(\Omega)$ for every $q$ finite.
    Then  H\"older inequality gives
    \begin{equation*}
        \left(\int_\Omega \abs{\psi}^n \abs{x}^\ell\right)^{\frac{1}{n}} \leq \left(\int_\Omega \abs{\psi}^q\right)^{\frac{1}{q}}\left(\int_\Omega \abs{x}^{\frac{\ell}{n}s}\right)^{\frac{1}{s}}\quad \text{ where }\quad \frac{1}{n}= \frac{1}{q }+\frac{1}{s}.
    \end{equation*}
    If $q \geq 0$, then $s \geq n$ and so
    \[
    q > \frac{n^2}{n-\abs{\ell}} \quad \implies \quad \frac{ \abs{\ell} }{n} s < n.
    \]
    Hence
    \[
    \norma{\psi}_{L^n(\Omega, \abs{x}^\ell \, dx)} \leq D_1 \norma{\psi}_{L^q(\Omega)} \leq C_1\norma{\psi}_{W^{1,n}(\Omega)},
    \]
    and, therefore, the embedding is at least continuous.
    
    Now, if we take a sequence $\{\psi_k\}_k$ bounded in $W^{1,n}(\Omega)$, by the  compact embedding of  $W^{1,n}(\Omega)$ in $L^{q}(\Omega)$, up to a subsequence, $\{\psi_{k}\}_k$  strongly converges in $L^q(\Omega)$.
    The continuous embedding of $L^q(\Omega)$  in $L^n(\Omega, \abs{x}^\ell \, dx)$ guarantees us the strong convergence in the second space as well.
\end{proof}

\begin{proof}[Proof of Proposition \ref{proposizione_3}]
    The solution to \eqref{p_originale} is the unique minimum of the functional

\begin{equation}
    \label{deff_funzionale_G}
    \begin{split}
        G : \psi \in W^{1,n}(\Omega) & \rightarrow
        \frac{1}{n} \int_{\Omega} \abs{\nabla \psi}^n \, dx + 
        \frac{1}{n} \int_{\partial \Omega} \beta(x) \abs{ \psi}^n  \, d\mathcal{H}^{n-1}(x) \\
        &- \int_{\Omega} f \psi \abs{x}^\ell \, dx.
    \end{split}
\end{equation}
Existence can be achieved by means of direct methods of calculus of variation, see for instance \cite{Dac, G, Lind,AGM}, while one can prove uniqueness by using the same arguments contained in \cite{Belloni_Kawohl, Diaz_Saa}.

Here we just recall the proof of the existence.

    First of all we notice that functional $G$ is bounded from below. Indeed by \eqref{lower_bon} and since $0 \not\in \partial\Omega$, we have
    \begin{align*}
        G(\psi) & = \frac{1}{n} \int_{\Omega} \abs{\nabla \psi}^n \, dx 
        + \frac{1}{n} \int_{\partial \Omega} \beta(x) 
        \abs{ \psi}^n \, d \mathcal{H}^{n-1}(x) - \int_{\Omega} f \psi \abs{x}^\ell \, dx \\
        & \geq \frac{1}{n} \int_{\Omega} \abs{\nabla \psi}^n \, dx 
        + C \int_{\partial \Omega} \abs{ \psi}^n - \int_{\Omega} f \psi \abs{x}^\ell \, dx \\
        & \geq C \norma{\psi}_{W^{1,n}(\Omega)}^n - \int_{\Omega} f \psi \abs{x}^\ell \, dx ,
    \end{align*}
    where in the last inequality we have used the well-known trace embedding Theorem.
    Using H\"older inequality and Proposition \ref{proposizione_2}, we have
    \begin{equation*}
        \label{stima_G_psi}
        G(\psi) \geq C \norma{\psi}_{W^{1,n}(\Omega)}^n - \norma{\psi}_{W^{1,n}(\Omega)} \norma{f}_{L^{n'}(\Omega, \abs{x}^\ell \, dx)}
    \end{equation*}
    By Young parametric inequality we finally get
    \begin{align}
    \label{g>m}
        G(\psi) 
        \geq \left( C - \frac{\varepsilon^n}{n} \right) \norma{\psi}_{W^{1,n}(\Omega)}^n - \frac{C}{\varepsilon^{n'} n'} \norma{f}_{L^{n'}(\Omega, \abs{x}^\ell \, dx)}
    \end{align}
    For  $\varepsilon$ small enough we have
    \[
    G(\psi) \geq - \frac{1}{\varepsilon n'} \, \norma{f}_{L^{n'}(\Omega, \abs{x}^\ell \, dx)} > - \infty .
    \]
    \noindent Now let $\Set{u_k}_k$ a minimizing sequence for $G$, we can assume, without loss of generality, that $G(u_k) \leq m_G + 1$ where
    \begin{equation*}
    m_{G}=\inf_{u\in W^{1,n}(\Omega )}G(u).
    \end{equation*}
     So, by \eqref{g>m} for a fixed $\varepsilon$ small, we have
    \[
    m_{G} +1+ \frac{C}{\varepsilon^{n'} n'} \norma{f}_{L^{n'}(\Omega, \abs{x}^\ell \, dx)} \geq \left( C - \frac{\varepsilon^n}{n} \right) \norma{u_k}_{W^{1,n}(\Omega)}^n.
    \]
    So the sequence $\Set{u_k}_k$ is bounded in $W^{1,n}(\Omega)$ hence, up to a subsequence, it converges weakly in $W^{1,n}(\Omega)$ and strongly in $L^n(\Omega,\abs{x}^\ell\, dx)$ and in $L^n(\partial \Omega)$ to a function $u$.
    
    Thus, we have
    \begin{align*}
        \liminf_k \int_{\Omega} \abs{\nabla u_k}^n \, dx &\geq \int_{\Omega} \abs{\nabla u}^n \, dx \\
        \liminf_k \int_{\Omega} \beta(x) u_k^n \, d\mathcal{H}^{n-1}(x) & = \int_{\partial \Omega} \beta(x) u^n \, d\mathcal{H}^{n-1}(x) \\
        \liminf_k \int_{\Omega} u_k f \abs{x}^\ell \, dx &= \int_{\Omega} u f \abs{x}^\ell \, dx
    \end{align*}
    and finally
    \[
    m_{G} = \liminf_k G(u_k) \geq G(u) = m_{G}.\qedhere
    \]
\end{proof}

\noindent \textbf{Founding} This work has been partially supported by the PRIN project 2017JPCAPN (Italy) Grant: “Qualitative and quantitative aspects of nonlinear PDEs” and by GNAMPA of INdAM.

\vspace{-1em}

\addcontentsline{toc}{chapter}{Bibliografia}

\printbibliography[heading=bibintoc, title={References}]

\renewcommand{\abstractname}{}    

\begin{abstract}
    \textsc{Dipartimento di Matematica e Applicazioni “R. Caccioppoli”, Universita` degli Studi di Napoli “Federico II”, Complesso Universitario Monte S. Angelo, via Cintia - 80126 Napoli, Italy.}
    
    \textsf{e-mail: vincenzo.amato@unina.it}
	
	\textsf{e-mail: francesco.chiacchio@unina.it}
	
	\vspace{0.2cm}
	
	\textsc{Mathematical and Physical Sciences for Advanced Materials and Technologies, Scuola Superiore Meridionale, Largo San Marcellino 10, 80126 Napoli, Italy.}
	
	\textsf{e-mail: andrea.gentile2@unina.it}
\end{abstract}

\pagebreak 

\let\clearpage\relax

\end{document}